\documentclass[11pt,a4paper]{article}

\usepackage[utf8]{inputenc}
\usepackage[T1]{fontenc}
\usepackage{lmodern}

\usepackage{amsmath, amssymb, amsthm}
\usepackage{mathtools} % Advanced math tools
\usepackage{bm} % Bold math symbols
\usepackage[affil-it]{authblk}
\usepackage[left=2.5cm,right=2.5cm, top=2.54cm, bottom=2.54cm]{geometry} % Page margins
\usepackage{setspace} % Line spacing
\usepackage{parskip} % Add space between paragraphs
\usepackage{enumitem} % Control over lists
\usepackage{cite}
\usepackage{graphicx} % Images
\usepackage{array, booktabs} % Tables
\usepackage{float} % Force figure/table placement
\usepackage{tabularx,booktabs}
\usepackage{hyperref} % Hyperlinks
\hypersetup{
    colorlinks=true,
    linkcolor=blue,
    citecolor=blue,
    urlcolor=blue
}

\newtheorem{theorem}{Theorem}[section]

\newtheorem{corollary}[theorem]{Corollary}
\newtheorem{proposition}[theorem]{Proposition}

\theoremstyle{definition}
\newtheorem{definition}[theorem]{Definition}
\newtheorem{example}[theorem]{Example}

\usepackage{tikz,pgf}
\usepackage[bf,small,tableposition=top]{caption}
\usepackage{subfig}

\usetikzlibrary{arrows}
\definecolor{wine}{rgb}{2,2,251}
\definecolor{myblue}{rgb}{0.2,0.4,2}
\hypersetup{
 colorlinks=true,
 linkcolor=blue, % custom color
 linktoc=all
}
\usepackage{rotating}
\usepackage{multicol}
\title{\bf Restricted  $r$-Stirling numbers of the second kind}
\author{Abdelghani Mehdaoui }
\affil{National Higher School of Mathematics, Po. Box 75,  Mahelma 16093,  Sidi
Abdellah, Algiers, Algeria. \\
USTHB,  RECITS Laboratory, Po. Box 32, El Alia 16111 Bab Ezzouar, Algiers, Algeria \\
mehabdelghani@gmail.com, abdelghani.mehdaoui@nhsm.edu.dz.}
\date{}

\begin{document}

% Title page
\maketitle

% Abstract
\begin{abstract}
	In this paper, we introduce the $(r,d)$-Stirling numbers of the second kind, a new generalization of the $r$-Stirling numbers obtained by imposing a distance restriction: for any two elements $i$ and $j$ within the same subset, we require that $|i-j|>d$. We establish recurrence relations, an explicit formula, ordinary and exponential generating functions, a reduction identity connecting these numbers to the standard $r$-Stirling numbers, and several combinatorial identities.
\end{abstract}

\section{Introduction}

The Stirling numbers of the second kind, denoted by ${n\brace k}$, count the number of ways to partition a set of $n$ elements into $k$ non-empty subsets. These numbers play a fundamental role in combinatorics, arising naturally in problems of enumeration, polynomial interpolation, and the study of set partitions. Various generalizations of these numbers have been introduced in the literature (see, for instance, \cite{belbachir2014,belbachir20141,belkhir2020,benyi2019,broder1984,carlitz1980,comtet1974,howard1980,howard1984,merris2000,mihoubi2012,mihoubi2016}).

Among these generalizations, the $r$-Stirling numbers of the second kind, denoted by ${n\brace k}_r$, were introduced by Broder~\cite{broder1984} and later rediscovered by Merris~\cite{merris2000}. These numbers extend the classical Stirling numbers by requiring that the first $r$ elements lie in distinct subsets. They satisfy the recurrence relation
\begin{equation}
	{n\brace k}_r = {n-1\brace k-1}_r + k{n-1\brace k}_r
\end{equation}
with ${n\brace k}_r = 0$ for $n< r$ and ${n\brace k}_r = \delta_{k,r}$ for $n=r$, where $\delta$ is the Kronecker delta.
They arise in the expansion of $(x+r)^{n-r}$ in terms of falling factorials $(x)_k = x(x-1)\cdots(x-k+1)$:
\begin{align}\label{eq:rst2}
	(x+r)^{n-r} =  \sum_{k=r}^n {n \brace k}_{r} (x)_{k-r},
\end{align}
and they admit the explicit formula
\begin{align}\label{eq:rst1}
	{n \brace k}_{r} = \frac{1}{(k-r)!}\sum_{j=0}^{k-r} (-1)^j \binom{k-r}{j} (k - j)^{n-r}.
\end{align}

For $r=0$, we recover the classical Stirling numbers of the second kind, ${n\brace k}_0={n\brace k}$. For additional properties and identities of the $r$-Stirling numbers, we refer the reader to \cite{broder1984,merris2000}.

More recently, further extensions have been proposed. Mihoubi and Maamra~\cite{mihoubi2012} introduced the $(r_1,\dots,r_p)$-Stirling numbers, and B{\'e}nyi et al.~\cite{benyi2019} studied restricted $r$-Stirling numbers with applications to combinatorial identities. Arn{\'o}czki and Nyul~\cite{arno2024} investigated restricted $r$-Stirling numbers, also known as $r$-Bessel numbers. Belbachir and Djemmada~\cite{belbachir2023} introduced the $(l,r)$-Stirling numbers via a combinatorial approach. On the other hand, Chu and Wei~\cite{wei2008} studied set partitions with a distance restriction $|i-j|>d$ for elements within the same subset.

In the present paper, we combine both the $r$-restriction and the distance restriction to define  the $(r,d)$-Stirling numbers of the second kind. In addition to requiring that the first $r$ elements lie in different subsets, we impose the condition that any two elements $i$ and $j$ within the same subset satisfy $|i-j|> d$. The formal definition is as follows

\begin{definition}Let ${n\brace k}_{r,d}$ denote the number of ways to partition a set of $n$ elements into $k$ non-empty subsets such that the first $r$ elements lie in different subsets and for any two elements $i$ and $j$ within the same subset, $|i-j|> d$ with $i\neq j$.
\end{definition}

If $(r,d)=(r,0)$, we recover the classical $r$-Stirling numbers, ${n\brace k}_{r,0}={n\brace k}_r$, and if $(r,d)=(0,0)$, we recover the classical Stirling numbers, ${n\brace k}_{0,0}={n\brace k}$.

When $r\leq d$, the $(r,d)$-Stirling numbers reduce to the numbers studied by Chu and Wei~\cite{wei2008}, who showed that
\begin{align}\label{wei2008}
	\displaystyle {n \brace k}_{r,d} = {n-d \brace k-d}\; \text{ if } r\leq d.
\end{align}

Therefore, in what follows we only consider the case where $r>d$.

\begin{example}

	Let us compute ${5\brace 3}_{2,1}$,

	\begin{center}
		\(\{\{\overline{1}\}, \{\overline{2},4\}, \{3,5\}\}\); \(\{\{\overline{1}, 3\}, \{\overline{2}, 4\}, \{5\}\}\);
		\(\{\{\overline{1}, 3\}, \{\overline{2}, 5\}, \{4\}\}\); \\ \ \\
		\(\{\{\overline{1}, 4\}, \{\overline{2}, 5\}, \{3\}\}\);
		\(\{\{\overline{1}, 4\}, \{\overline{2}\}, \{3,5\}\}\);
		\(\{\{\overline{1}, 5\}, \{\overline{2}, 4\}, \{3\}\}\);\\ \ \\
		\(\{\{\overline{1}, 3, 5\}, \{\overline{2}\}, \{4\}\}\).
	\end{center}

	So ${5\brace 3}_{2,1}=7$.
\end{example}

\begin{theorem} Let $n\geq k\geq r$, where $d$ and $r$ are non-negative integers. Then ${n\brace k}_{r,d}$ satisfies the following recurrence relation
	\begin{equation}\label{rec:3}
		{n\brace k}_{r,d} = {n-1\brace k-1}_{r,d} + (k-d){n-1\brace k}_{r,d}
	\end{equation}
	with ${ 0 \brace  0 }_{r,d} = 1$, ${ n \brace  k }_{r,d} = 0$ for $k < r$, and ${ r \brace  r }_{r,d} = 1.$
\end{theorem}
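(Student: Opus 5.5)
We argue combinatorially by conditioning on the position of the largest element $n$ in a partition counted by ${n\brace k}_{r,d}$. Throughout we assume $r>d$, as in the paper. The restriction that the first $r$ elements lie in different blocks does not involve $n$ when $n>r$. The distance condition $|i-j|>d$ involving $n$ only concerns the elements $n-1,\dots,n-d$.

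We split into two cases.

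\emph{Case 1: $n$ forms a singleton block.} Removing $\{n\}$ leaves a partition of $\{1,\dots,n-1\}$ into $k-1$ blocks. This partition still satisfies both restrictions. Conversely, adjoining $\{n\}$ to any such partition produces a valid partition of $\{1,\dots,n\}$, since $n$ is alone and imposes no distance constraint. This case contributes ${n-1\brace k-1}_{r,d}$.

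\emph{Case 2: $n$ lies in a block with other elements.} Removing $n$ gives a valid partition $\pi$ of $\{1,\dots,n-1\}$ into $k$ blocks. We count the ways to reinsert $n$. The element $n$ may be placed in a block $B$ of $\pi$ if and only if $B$ contains none of $n-1,\dots,n-d$. The key observation is that the elements $n-d,\dots,n-1$ are pairwise at distance at most $d-1<d+1$. Hence, by the distance condition on $\pi$, they lie in $d$ \emph{distinct} blocks. Exactly $d$ of the $k$ blocks are therefore forbidden, and $n$ can be inserted in exactly $k-d$ ways, each producing a valid partition. This gives $(k-d){n-1\brace k}_{r,d}$. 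Adding the two cases yields \eqref{rec:3}.

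The main point needing care is the range of validity of this argument.
\begin{itemize}[label={}]
\item First, we need $n-d\ge 1$, so that the $d$ elements exist. Also $n>r$ is needed, so that $n$ is not one of the first $r$ elements.
\item Since $n\ge r+1>d$, both requirements hold for $n>r$.
\item For $n=r$ the recurrence should be read together with the boundary value ${r\brace r}_{r,d}=1$. Indeed, the $r$ elements must lie in distinct blocks, and all pairwise gaps are then irrelevant.
\item The value ${n\brace k}_{r,d}=0$ for $k<r$ is immediate, since $r$ distinct blocks are needed.
\item ${0\brace 0}_{r,d}=1$ holds by convention (the empty partition).
\end{itemize}
I would state the recurrence for $n>r$ and check these initial values directly from the definition. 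I would also note that the recurrence holds trivially in the degenerate ranges where both sides vanish.
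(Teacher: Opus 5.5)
Your proof is correct and follows the same route as the paper. Like the paper, you condition on whether $n$ is a singleton (contributing ${n-1\brace k-1}_{r,d}$) or is inserted into one of $k-d$ admissible blocks of a partition counted by ${n-1\brace k}_{r,d}$. You also improve on the paper in two places: you justify why exactly $d$ blocks are excluded (the elements $n-d,\dots,n-1$ are forced into distinct blocks), and you rightly restrict the recurrence to $n>r$, since at $n=k=r$ the stated boundary conventions would turn it into $1=0$.
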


\begin{proof}

By considering the placement of the $n$-th element, two cases arise. If the $n$-th element forms a singleton subset, then the remaining $n-1$ elements can be partitioned into $k-1$ subsets in ${n-1\brace k-1}_{r,d}$ ways. Otherwise, the $n$-th element belongs to a subset containing other elements. In this case, we first partition the $n-1$ elements into $k$ subsets in ${n-1\brace k}_{r,d}$ ways, and then insert it into one of the $(k-d)$ admissible subsets. Summing  these two cases yields the desired result.
\end{proof}

Using the recurrence relation~\eqref{rec:3}, we compute the $(r,d)$-Stirling numbers of the second kind for small values of $n$ and $k$. Tables~\ref{table:1} and~\ref{table:2} display the first few rows of ${n\brace k}_{r,d}$ for $r=2,3$ with $d=1$, and for $r=5$ with $d=2,3$, respectively.

\begin{table}[H]
	\centering
	{\small
	\subfloat[${ n \brace  k }_{2,1}.$]{%
	\begin{tabular}{l|llllllll}
		$n/k$ & $2$ & $3$ & $4$ & $5$ & $6$ & $7$ & $8$  \\ \hline
		$2$ & $1$ \\
		$3$ & $1$ & $1$ \\
		$4$ & $1$ & $3$ & $1$ \\
		$5$ & $1$ & $7$ & $6$ & $1$ \\
		$6$ & $1$ & $15$ & $25$ & $10$ & $1$ \\
		$7$ & $1$ & $31$ & $90$ & $65$ & $15$ & $1$ \\
		$8$ & $1$ & $63$ & $301$ & $350$ & $140$ & $21$ & $1$ \\

		\end{tabular}
	}%
	\qquad% --- set horizontal distance between tables here
	\subfloat[${ n \brace  k }_{3,1}.$]{%
	\begin{tabular}{l|lllllll}
		$n/k$ & $3$ & $4$ & $5$ & $6$ & $7$ & $8$& $9$ \\ \hline
		$3$ & $1$ \\
		$4$ & $2$ & $1$ \\
		$5$ & $4$ & $5$ & $1$ \\
		$6$ & $8$ & $19$ & $9$ & $1$ \\
		$7$ & $16$ & $65$ & $55$ & $14$ & $1$ \\
		$8$ & $32$ & $211$ & $285$ & $125$ & $20$ & $1$ \\
		$9$ & $64$ & $665$ & $1351$ & $910$ & $245$ & $27$ & $1$
		\end{tabular}
	}}
	\caption{The first rows of ${ n \brace  k }_{r,d}$ for $r=2,3$ and $d=1$. }\label{table:1}
\end{table}

\begin{table}[H]
	\centering
	{\small
	\subfloat[${ n \brace  k }_{5,2}.$]{%

		\begin{tabular}{l|lllllll}
			$n/k$ & $5$ & $6$ & $7$ & $8$ & $9$ & $10$  \\ \hline
			$5$ & $1$ \\
			$6$ & $3$ & $1$ \\
			$7$ & $9$ & $7$ & $1$ \\
			$8$ & $27$ & $37$ & $12$ & $1$ \\
			$9$ & $81$ & $175$ & $97$ & $18$ & $1$ \\
			$10$ & $243$ & $781$ & $660$ & $205$ & $25$ & $1$
			\end{tabular}
	}%
   \qquad  \qquad% --- set horizontal distance between tables here
	\subfloat[${ n \brace  k }_{5,3}.$]{%

		\begin{tabular}{l|lllllll}
			$n/k$ & $5$ & $6$ & $7$ & $8$ & $9$ & $10$ \\ \hline
			$5$ & $1$ \\
			$6$ & $2$ & $1$ \\
			$7$ & $4$ & $5$ & $1$ \\
			$8$ & $8$ & $19$ & $9$ & $1$ \\
			$9$ & $16$ & $65$ & $55$ & $14$ & $1$ \\
			$10$ & $32$ & $211$ & $285$ & $125$ & $20$ & $1$
			\end{tabular}
	}}
	\caption{The first rows of ${ n \brace  k }_{r,d}$ for $r=5$ and $d=2,3$. }\label{table:2}
\end{table}

\begin{theorem} Let $n\geq k\geq r$, where $d$ and $r$ are non-negative integers. Then ${n\brace k}_{r,d}$ satisfies the following recurrence relation
	\begin{equation}
		{n\brace k}_{r,d} = {n\brace k}_{r-1,d} -(r-d-1){n-1\brace k}_{r-1,d}
	\end{equation}
\end{theorem}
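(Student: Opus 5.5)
The plan is a purely algebraic induction on $n$ using the recurrence \eqref{rec:3}. The key observation is that the coefficient $(k-d)$ in \eqref{rec:3} does not depend on $r$. Write $A(n,k)={n\brace k}_{r-1,d}$ and $c=r-d-1\ge 0$, using the standing assumption $r>d$. Define
\[
D(n,k)=A(n,k)-c\,A(n-1,k).
\]
The goal is to show that $D(n,k)={n\brace k}_{r,d}$. Since both sides will satisfy the same linear recurrence, it suffices to check agreement on a suitable boundary.

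First, $D$ obeys \eqref{rec:3}. Apply \eqref{rec:3} for the parameter $r-1$ to both $A(n,k)$ and $A(n-1,k)$, and regroup:
\[
D(n,k)=\bigl[A(n-1,k-1)-cA(n-2,k-1)\bigr]+(k-d)\bigl[A(n-1,k)-cA(n-2,k)\bigr]=D(n-1,k-1)+(k-d)D(n-1,k).
\]
This is valid whenever $k\ge r$, since then $k-1\ge r-1$ lies in the range where \eqref{rec:3} holds for $A$. The target ${n\brace k}_{r,d}$ satisfies the same recurrence for $k\ge r$.

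Second, I will run the induction over the region $n\ge r$, $k\ge r-1$, so that the step from $(n,k)$ to $(n-1,k-1)$ never leaves it. This requires two boundary checks.

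\emph{The column $k=r-1$.} Here ${n\brace r-1}_{r,d}=0$. On the other side, iterating \eqref{rec:3} with parameter $r-1$ gives ${m\brace r-1}_{r-1,d}=(r-1-d)^{m-r+1}=c^{\,m-r+1}$, because ${m-1\brace r-2}_{r-1,d}=0$. Hence $D(m,r-1)=c^{\,m-r+1}-c\cdot c^{\,m-r}=0$ for $m\ge r$. This also covers the degenerate case $c=0$, i.e. $r-1=d$, where both terms vanish.

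\emph{The row $n=r$.} Only $k=r$ is relevant, and $D(r,r)=A(r,r)-c\,A(r-1,r)=1-0=1={r\brace r}_{r,d}$. Entries with $k>n$ are $0$ on both sides.

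Induction on $n$ then gives the identity for all $n\ge k\ge r$.

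The main obstacle is this boundary bookkeeping, especially the $k=r-1$ column, where the recurrence pulls in values outside the stated range. I will handle it with the closed form $c^{\,m-r+1}$. A combinatorial proof is also natural. Elements $r$ and $i<r$ may share a block only when $i\le r-d-1$, which explains the factor $r-d-1$. However, deleting $r$ shifts the later labels and alters the distance constraint, so I would not rely on that bijection.
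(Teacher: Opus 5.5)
Your proof is correct, but it takes a different route from the paper. The paper argues combinatorially. It sorts the partitions counted by ${n\brace k}_{r-1,d}$ according to whether $r$ shares a block with some $j\le r-1$ (which forces $j\le r-d-1$). It then asserts that deleting $r$ leaves a partition counted by ${n-1\brace k}_{r-1,d}$. You instead observe that $D(n,k)$ inherits \eqref{rec:3} because $k-d$ does not depend on $r$, and you pin $D$ down by the column $k=r-1$ (via ${m\brace r-1}_{r-1,d}=c^{m-r+1}$) and the row $n=r$.

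Your doubt about the bijection is well founded, and it is a real defect in the paper's argument: after deleting $r$ and relabelling, distances across position $r$ shrink by one. For $n=5$, $k=3$, $r=3$, $d=1$, the partition $\{1,3\},\{2,4\},\{5\}$ becomes $\{1\},\{2,3\},\{4\}$, which violates $|i-j|>1$. In the other direction, inserting a new $3$ into the block of $1$ in $\{1,3\},\{2\},\{4\}$ gives the invalid $\{1,3,4\},\{2\},\{5\}$. The counts agree only because the identity happens to be true; for instance, it is Broder's identity transported through the reduction of Theorem~\ref{th:relation}.

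So your algebraic route is the rigorous one. It relies only on \eqref{rec:3}, whose insertion argument is sound. Its cost is that it gives no bijective explanation of the factor $r-d-1$.

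Two small precisions:
\begin{itemize}
\item Whether \eqref{rec:3} applies is governed by the row index, since the recurrence fails at $n=r'$. So $D$ satisfies it for $n\ge r+1$, not merely for $k\ge r$. Your induction only uses it above the base row $n=r$, so nothing breaks.
\item The hypothesis $r>d$ is needed for more than $c\ge 0$. It is what makes \eqref{rec:3} and ${m\brace r-1}_{r-1,d}=(r-1-d)^{m-r+1}$ valid for the parameter $r-1$. For $r\le d$ the statement is in fact false: with $n=3$, $k=2$, $r=d=1$, the left side is $1$ and the right side is $2$.
\end{itemize}
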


\begin{proof}
We classify the partitions counted by ${n\brace k}_{r-1,d}$ according to the placement of the $r$-th element. Recall that ${n\brace k}_{r-1,d}$ enumerates partitions of $\{1,\dots,n\}$ into $k$ subsets in which the elements $1,\dots,r-1$ lie in distinct subsets and the distance restriction $|i-j|>d$ holds for all $i,j$ in different subsets. If $r$ occupies a subset distinct from those containing $1,\dots,r-1$, then the elements $1,\dots,r$ all lie in distinct subsets. Such partitions are counted by ${n\brace k}_{r,d}$.

Otherwise, $r$ belongs to the same subset as some $j\in\{1,\dots,r-1\}$. The distance restriction $|r-j|>d$ forces $j\le r-d-1$, yielding $(r-d-1)$ admissible choices for $j$. Removing $r$ then produces a valid partition counted by ${n-1\brace k}_{r-1,d}$, contributing $(r-d-1){n-1\brace k}_{r-1,d}$ partitions.

Summing the two cases gives ${n\brace k}_{r-1,d} = {n\brace k}_{r,d} + (r-d-1){n-1\brace k}_{r-1,d}$, which rearranges to the desired result.
\end{proof}

The following special cases follow easily from the recurrence relation
\[
\begin{array}{llll}
		\displaystyle {n \brace k}_{0,0} = {n \brace k},    \quad  \displaystyle {n \brace k}_{0,1} = {n-1 \brace k-1}, \quad \displaystyle {n \brace k}_{r,0} = {n \brace k}_r,\quad \displaystyle  {n \brace n}_{r,d} = 1,
		%&\displaystyle {n \brace k}_{0,d} = {n-d \brace k-d}.
		\\ \ \\   \displaystyle  { n \brace  r }_{r,d} = (r-d)^{n-r}, 		% & \displaystyle {n \brace 0}_{r,d} =?
		\quad \displaystyle{n \brace n-1}_{r,d} = \frac{(n+r-1)(n-r)}{2}-d(n-r).
	\end{array}
\]

\begin{theorem}\label{thm:maintheorem1}
Let $n, k, r, p, d$ be non-negative integers such that $n \ge k \ge r > d$ and $r \ge p > d$. Then
\begin{align}\label{eq:convolution}
    {n \brace k}_{r,d}
    = \sum_{i=0}^{n-r} \binom{n-r}{i}{p+i\brace p}_{p,d}{n-p-i+d\brace k-p+d}_{r-p+d,d}.
\end{align}
\end{theorem}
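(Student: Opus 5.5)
The plan is to reduce the statement to an identity for ordinary $r$-Stirling numbers by shifting indices, and then prove that identity by expanding $(x+r)^{n-r}$ in the falling-factorial basis, as in \eqref{eq:rst2}.

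\emph{Step 1 (reduction).} I claim that for $r>d$,
\[
{n\brace k}_{r,d}={n-d\brace k-d}_{r-d}.
\]
Set $n'=n-d$, $k'=k-d$, $r'=r-d\ge 1$. By the first recurrence theorem, the left side satisfies ${n\brace k}_{r,d}={n-1\brace k-1}_{r,d}+(k-d){n-1\brace k}_{r,d}$. In the primed variables this is exactly the $r'$-Stirling recurrence ${n'\brace k'}_{r'}={n'-1\brace k'-1}_{r'}+k'{n'-1\brace k'}_{r'}$. The boundary data also match: ${r\brace r}_{r,d}=1$, and the value vanishes when $k<r$, i.e.\ $k'<r'$. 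A routine induction on $n$ then gives the claim. As a sanity check, ${5\brace 3}_{2,1}={4\brace 2}_1={4\brace 2}=7$, and ${n\brace r}_{r,d}=(r-d)^{n-r}$ agrees with ${n'\brace r'}_{r'}=r'^{\,n'-r'}$.

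\emph{Step 2 (rewrite the right-hand side).} Apply Step 1 to both factors in \eqref{eq:convolution}; this is legitimate because $p>d$ and $r-p+d>d$ whenever $r>p$. When $r=p$ the second factor has index $d$, and by \eqref{wei2008} it is ${n-p-i\brace k-p}={n-p-i\brace k-p}_0$, which is consistent with the reduced form below. We get
\[
{p+i\brace p}_{p,d}=(p-d)^i,\qquad {n-p-i+d\brace k-p+d}_{r-p+d,d}={n-p-i\brace k-p}_{r-p}.
\]
With $p'=p-d$, the identity to prove becomes
\[
{n'\brace k'}_{r'}=\sum_{i}\binom{n'-r'}{i}p'^{\,i}{n'-p'-i\brace k'-p'}_{r'-p'}, \qquad 0<p'\le r'.
\]
This is a Broder-type identity for ordinary $r$-Stirling numbers.

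\emph{Step 3 (proof of the $r$-Stirling identity).} Write $(x+r')^{n'-r'}=\bigl(p'+(x+r'-p')\bigr)^{n'-r'}$ and expand by the binomial theorem:
\[
(x+r')^{n'-r'}=\sum_i\binom{n'-r'}{i}p'^{\,i}\,(x+r'-p')^{n'-r'-i}.
\]
Each power on the right is $(x+(r'-p'))^{(n'-p'-i)-(r'-p')}$. By \eqref{eq:rst2} with parameters $(n'-p'-i,\,r'-p')$, it equals $\sum_{k'}{n'-p'-i\brace k'-p'}_{r'-p'}(x)_{k'-r'}$. On the other hand, \eqref{eq:rst2} expands the left side as $\sum_{k'}{n'\brace k'}_{r'}(x)_{k'-r'}$. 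The falling factorials form a basis of the polynomials, so comparing coefficients of $(x)_{k'-r'}$ gives the identity.

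The main obstacle is bookkeeping rather than any single idea: verifying the index ranges and boundary cases. Specifically, one must check that Step 1's reduction is valid for each factor (including $r=p$ via \eqref{wei2008}) and that the summation range $0\le i\le n-r$ matches the binomial expansion. I would also add a direct combinatorial reading as a cross-check. The $n-r$ non-distinguished elements are split so that $i$ of them join the blocks of the first $p$ distinguished elements, giving $(p-d)^i$ admissible choices. The remaining elements, together with the other distinguished ones, form a partition counted by the second factor.
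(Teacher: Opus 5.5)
Your argument is correct, but it takes a different route from the paper.

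\textbf{The paper's route.} The paper stays entirely inside the $(r,d)$ setting. It inducts on $n-r$ and splits $\binom{n-r}{i}$ by Pascal's rule. On one half it uses ${p+i+1\brace p}_{p,d}=(p-d){p+i\brace p}_{p,d}$; on the other it applies \eqref{rec:3}, with coefficient $k-p$, to the second factor. It then checks that the right-hand side satisfies $R(n,k)=R(n-1,k-1)+(k-d)R(n-1,k)$.

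\textbf{Your route.} You first establish the shift ${n\brace k}_{r,d}={n-d\brace k-d}_{r-d}$ by matching recurrences and initial values. This is the paper's later Theorem~\ref{th:relation}, which the paper gets from the explicit formula rather than from \eqref{rec:3}. You then recognize the statement as a Broder-type identity for ordinary $r$-Stirling numbers. You prove that identity by writing $x+r'=p'+(x+r'-p')$ and comparing coefficients in the falling-factorial basis via \eqref{eq:rst2}.

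\textbf{What each buys.} The paper's proof needs only \eqref{rec:3} and treats $p=r$ uniformly. Yours explains where the $+d$ shifts in the second factor come from, and shows the theorem is a classical identity in disguise.

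\textbf{Two small points.} First, say explicitly that \eqref{rec:3} holds for every $k$ once $n>r$. The induction must start at $n=r$ from the boundary values, since under the zero conventions the recurrence is not valid at $n=r$. Second, your Step~1 argument works verbatim for $r=d$, with $r'=0$ and ${d\brace k}_{d,d}=\delta_{k,d}$. So the case $p=r$ does not actually need \eqref{wei2008}.

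\textbf{Drop the closing ``direct combinatorial reading''.} It is false as stated, for two reasons.
\begin{itemize}
\item An element joining one of the first $p$ blocks need not have $p-d$ admissible choices; the number depends on where its $d$ predecessors sit.
\item The second factor counts partitions of $n-p-i+d$ elements, not of the $n-p-i$ leftover ones.
\end{itemize}
For example, take $(n,k,r,p,d)=(5,4,3,2,1)$. The summands of the identity are $3,2,0$. Classifying the five partitions by the number $i$ of elements of $\{4,5\}$ in the blocks of $1$ and $2$ gives classes of sizes $1,4,0$. Both totals equal ${5\brace 4}_{3,1}=5$, but the terms do not match. The block-joining interpretation is valid only after the shift to ordinary $r'$-Stirling numbers.
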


\begin{proof}
We proceed by induction on $n-r$. For the base case $n=r$ (which implies $k=r$), the left-hand side is $1$, and the right-hand side trivially reduces to the single term $i=0$, giving $\binom{0}{0}{p \brace p}_{p,d}{r-p+d \brace r-p+d}_{r-p+d,d} = 1$.

Assume the identity holds for $n-1$, and let $R(n,k)$ denote the right-hand side of \eqref{eq:convolution}. Applying the binomial coefficients identity $\binom{n-r}{i} = \binom{n-1-r}{i} + \binom{n-1-r}{i-1}$ splits $R(n,k)$ into two sums $S_1$ and $S_2$, where
\begin{align*}
	S_1&= \sum_{i=0}^{n-1-r} \binom{n-1-r}{i} {p+i\brace p}_{p,d}{n-p-i+d\brace k-p+d}_{r-p+d,d}, \\  \text{ and } \\
	S_2&= \sum_{i=1}^{n-r}\binom{n-1-r}{i-1}{p+i\brace p}_{p,d}{n-p-i+d\brace k-p+d}_{r-p+d,d}.
\end{align*}
For $S_2$, shifting the index $i \mapsto i+1$ and applying the recurrence ${p+i+1 \brace p}_{p,d} = (p-d){p+i \brace p}_{p,d}$ yields
\[
S_2 = (p-d) \sum_{i=0}^{n-1-r} \binom{n-1-r}{i} {p+i \brace p}_{p,d} {n-1-p-i+d \brace k-p+d}_{r-p+d,d}.
\]

For $S_1$, applying the recurrence to the second factor gives
\[
{n-p-i+d \brace k-p+d}_{r-p+d,d} = {n-1-p-i+d \brace k-1-p+d}_{r-p+d,d} + (k-p){n-1-p-i+d \brace k-p+d}_{r-p+d,d}.
\]

Substituting this expansion into $S_1$ and combining it with $S_2$, the sums recombine directly into evaluations of $R(n,k)$
\[
R(n,k) = R(n-1, k-1) + (k-p)R(n-1, k) + (p-d)R(n-1, k).
\]

Simplifying the coefficients gives $R(n,k) = R(n-1, k-1) + (k-d)R(n-1, k)$. By the induction hypothesis, this exactly matches the recurrence ${n-1 \brace k-1}_{r,d} + (k-d){n-1 \brace k}_{r,d} = {n \brace k}_{r,d}$, which completes the proof.
\end{proof}

\begin{theorem}\label{thm:thmrp} Let $n\geq k\geq r$, where $d$ and $r$ are non-negative integers. Then

	\begin{align}
		{ n \brace k}_{r,d} & = \sum_{i=0}^{n-r} \binom{n-r}{i}{n-r-i\brace k-r}(r-d)^{i}
	\end{align}

\end{theorem}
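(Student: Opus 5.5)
We prove the identity by induction on $n-r$, using the recurrence \eqref{rec:3} together with the classical recurrence ${m\brace j}={m-1\brace j-1}+j{m-1\brace j}$ for ordinary Stirling numbers. Write $T(n,k)$ for the right-hand side. It is enough to show three things: $T(r,k)=\delta_{k,r}$, $T(n,k)=0$ for $k<r$, and
\[
T(n,k)=T(n-1,k-1)+(k-d)T(n-1,k).
\]
Since ${n\brace k}_{r,d}$ satisfies the same recurrence with the same initial values (Theorem on \eqref{rec:3}), the two sides then agree.

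\textbf{Boundary cases.} For $n=r$ only $i=0$ survives, giving $T(r,k)={0\brace k-r}=\delta_{k,r}$. For $k<r$ the Stirling factor vanishes, so $T(n,k)=0$.

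\textbf{Inductive step.} As in the proof of Theorem~\ref{thm:maintheorem1}, split $\binom{n-r}{i}=\binom{n-1-r}{i}+\binom{n-1-r}{i-1}$.

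The part with $\binom{n-1-r}{i-1}$ is handled by shifting $i\mapsto i+1$. One factor $(r-d)$ comes out, and what remains is exactly $(r-d)T(n-1,k)$.

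In the part with $\binom{n-1-r}{i}$, apply the Stirling recurrence to ${n-r-i\brace k-r}$, which gives ${n-1-r-i\brace k-r-1}+(k-r){n-1-r-i\brace k-r}$. The first term sums to $T(n-1,k-1)$, since the upper index $k-1-r$ is right for $T(n-1,k-1)$. The second term sums to $(k-r)T(n-1,k)$.

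Adding the two parts gives $T(n,k)=T(n-1,k-1)+(k-r+r-d)T(n-1,k)$, which is the required recurrence.

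\textbf{Where care is needed.} The only delicate point is the edge terms of the sums. When $i=n-r$, the term ${0\brace k-r}$ is nonzero only if $k=r$. When $k=r$, the $T(n-1,k-1)$ term involves ${m\brace -1}=0$. Both conventions must be handled consistently. One also needs the classical recurrence to hold at $m=0$, i.e.\ ${0\brace j}=\delta_{j,0}$, which is consistent.

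\textbf{Alternative check.} One could instead specialize Theorem~\ref{thm:maintheorem1} with $p=r$, which gives
\[
{r+i\brace r}_{r,d}=(r-d)^i \quad\text{and}\quad {n-r-i+d\brace k-r+d}_{d,d}={n-r-i\brace k-r}
\]
by \eqref{wei2008}. This yields the statement immediately, and it requires $r>d$. The direct induction above also covers the remaining cases.
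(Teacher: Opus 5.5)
Your proof is correct under the paper's standing assumption $r>d$, and in fact also for $r=d$. Your main argument takes a different route from the paper's.

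The paper gets the identity in one line. It sets $p=r$ in Theorem~\ref{thm:maintheorem1}, then evaluates ${r+i\brace r}_{r,d}=(r-d)^i$ and ${n-r-i+d\brace k-r+d}_{d,d}={n-r-i\brace k-r}$ via the Chu--Wei identity \eqref{wei2008}. That is exactly your ``alternative check''.

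Your main route instead checks directly that the right-hand side satisfies \eqref{rec:3} with the same initial row $\delta_{k,r}$. It uses only Pascal's rule and the classical Stirling recurrence. In effect, it is the computation from the proof of Theorem~\ref{thm:maintheorem1}, redone with the ordinary Stirling recurrence in place of the $(d,d)$-recurrence.

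\begin{itemize}
\item Your version is self-contained: it needs neither the general convolution theorem nor the external result \eqref{wei2008}. It also covers $r=d$, where the paper's specialization is unavailable ($p>d$ is required) and the formula reduces to ${n-d\brace k-d}$.
\item The paper's version is shorter and exhibits the formula as a special case of a more general convolution.
\end{itemize}

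A minor point: you never need the classical recurrence at $m=0$, where it actually fails since ${0\brace 0}=1$. The $i=n-r$ term of the $\binom{n-1-r}{i}$-part has coefficient $\binom{n-1-r}{n-r}=0$, so it simply drops.

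Your closing claim that the direct induction ``also covers the remaining cases'' is wrong for $r<d$. The recurrence \eqref{rec:3} holds only when $n-1\ge d$. The blocks forbidden to $n$ are those containing $n-1,\dots,n-d$, and if $n-1<d$ there are fewer than $d$ of them.

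For $r<d$ your induction must pass through rows $r<n\le d$, where \eqref{rec:3} fails. The identity itself is false there. Take $r=0$, $d=1$, $n=2$, $k=1$: the left-hand side is $0$, while the right-hand side is ${2\brace 1}-2{1\brace 1}+{0\brace 1}=-1$.

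So the statement must be read with $r\ge d$ (the paper assumes $r>d$ throughout). That is exactly the range in which your induction is valid.
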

\begin{proof}
	By substituting $p=r$ in Theorem~\ref{thm:maintheorem1}, we get
	\begin{align*}
{n \brace k}_{r,d} &= \sum_{i=0}^{n-r} \binom{n-r}{i} {r+i\brace r}_{r,d} {n-r-i+d\brace k-r+d}_{d,d}\\
	&=  \sum_{i=0}^{n-r} \binom{n-r}{i}  {n-r-i\brace k-r}(r-d)^{i}.
	\end{align*}
Using formula \eqref{wei2008} and the special case ${n \brace r}_{r,d} = (r-d)^{n-r}$ we get the desired result.
	%{\bf (verified also  by sagemath)}
\end{proof}

Now, we derive an explicit formula for the $(r,d)$-Stirling numbers of the second kind using the Inclusion-Exclusion principle.
\begin{theorem}
For integers $n \ge k \ge r > d$, the $(r,d)$-Stirling numbers of the second kind satisfy the explicit formula:
\begin{align}\label{eq:explicit}
    {n \brace k}_{r,d} = \frac{1}{(k-r)!} \sum_{j=0}^{k-r} (-1)^{j} \binom{k-r}{j} (k - d - j)^{n-r}.
\end{align}
\end{theorem}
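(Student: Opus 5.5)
The quickest route uses Theorem~\ref{thm:thmrp} together with the classical explicit formula for Stirling numbers of the second kind. The paper mentions inclusion--exclusion, so I will also sketch a direct combinatorial reading that leads to the same sum.

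\textbf{Algebraic route.} Set $m=k-r$ and $a=r-d>0$. By Theorem~\ref{thm:thmrp},
\[
{n\brace k}_{r,d}=\sum_{i=0}^{n-r}\binom{n-r}{i}{n-r-i\brace m}a^{i}.
\]
Substitute the classical formula (the case $r=0$ of \eqref{eq:rst1}):
\[
{N\brace m}=\frac{1}{m!}\sum_{j=0}^{m}(-1)^j\binom{m}{j}(m-j)^{N}, \qquad N=n-r-i .
\]
This substitution is valid for every $N\ge 0$, including $N<m$ where both sides vanish, and including $N=0$ with the convention $0^0=1$. Swap the two finite sums. For each fixed $j$, the inner sum is
\[
\sum_{i}\binom{n-r}{i}a^{i}(m-j)^{n-r-i}=(m-j+a)^{n-r}
\]
by the binomial theorem. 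Since $m-j+a=k-r-j+r-d=k-d-j$, this gives exactly \eqref{eq:explicit}.

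\textbf{The main obstacle} is bookkeeping rather than anything conceptual. The points to check are:
\begin{itemize}
\item the range of $i$ is the full $0\le i\le n-r$, so the binomial theorem applies without truncation;
\item the Stirling formula holds at the boundary cases $N<m$ and $N=0$, as noted above;
\item the hypothesis $r>d$ makes $a$ a positive integer, so no degenerate signs appear.
\end{itemize}

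\textbf{Combinatorial route.} This one avoids Theorem~\ref{thm:thmrp}. Process the elements $r+1,\dots,n$ in increasing order, assigning each to one of $k-r$ labelled free blocks or to one of the $r$ blocks of $1,\dots,r$. The elements $1,\dots,r$ themselves occupy distinct blocks.

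The key point is the count of blocks forbidden to an element $x>r$ by the distance condition. These are the blocks containing $x-1,\dots,x-d$, which are distinct from one another. With the labelled-block convention, this gives exactly $k-d$ admissible choices for each of the $n-r$ elements. This is the same reasoning behind the $(k-d)$ factor in \eqref{rec:3}.

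Then apply inclusion--exclusion over the set of free blocks left empty, where excluding $j$ blocks leaves $(k-d-j)^{n-r}$ choices, and divide by $(k-r)!$ to unlabel the free blocks. The delicate step in this version is justifying that excluding $j$ empty blocks removes exactly $j$ options uniformly. That is why I would present the algebraic route as the main proof.
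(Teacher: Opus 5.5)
Your proof is correct, but your main (algebraic) route differs from the paper's proof.

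The paper's proof is essentially your ``combinatorial route''. It labels the $k-r$ free blocks and asserts that each of $r+1,\dots,n$ has $k-d$ admissible blocks. It then asserts that forbidding $j$ specified free blocks leaves $(k-d-j)^{n-r}$ assignments, applies inclusion--exclusion, and divides by $(k-r)!$.

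You instead substitute the classical explicit formula (the case $r=0$ of \eqref{eq:rst1}) into Theorem~\ref{thm:thmrp}. You then interchange the finite sums and collapse the inner sum by the binomial theorem, using $(k-r-j)+(r-d)=k-d-j$. Your boundary checks ($N<m$ and $0^0=1$) are exactly what this needs. There is also no circularity: Theorem~\ref{thm:thmrp} rests only on Theorem~\ref{thm:maintheorem1}, the Chu--Wei identity \eqref{wei2008}, and the value $(r-d)^{n-r}$ at $k=r$. None of these uses \eqref{eq:explicit} or the reduction identity.

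Your route buys a purely formal verification with no combinatorial bookkeeping. Its cost is a longer dependency chain: the induction of Theorem~\ref{thm:maintheorem1} plus an external result. The paper's argument, by contrast, is self-contained and explains where $k-d-j$ comes from.

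The step you call delicate in the combinatorial version is in fact immediate, and it is exactly what the paper leaves implicit. Take an assignment avoiding a set $S$ of $j$ free labels. The blocks forbidden to $x>r$ are those of $x-1,\dots,x-d$. These blocks are pairwise distinct, since any two of these elements are at distance at most $d-1$. They are also already occupied, so none of them lies in $S$. Hence exactly $k-j-d$ of the $k-j$ permitted labels remain for every $x$.
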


\begin{proof}
We interpret ${n \brace k}_{r,d}$ as the number of ways to distribute the set $\{r+1, \dots, n\}$ into $k$ bins, where the first $r$ bins are distinct (containing the first $r$ elements) and the remaining $k-r$ bins are indistinguishable. First, consider the $k-r$ new bins as distinct. The number of choices for placing an element into  $k$ bins is $k-d$. Thus, the total number of unrestricted mappings of $n-r$ elements is $(k-d)^{n-r}$.  Using the inclusion-exclusion principle, let $S_j$ be the set of mappings where at least $j$ specific new bins are empty. The number of available bins becomes $k-j$, reducing the choices to $(k-j)-d$. Thus,
\[
|S_j| = \binom{k-r}{j} (k-d-j)^{n-r}.
\]
The number of surjective mappings is $\displaystyle \sum_{j=0}^{k-r} (-1)^j |S_j|$. To account for the indistinguishability of the $k-r$ new bins, we divide by $(k-r)!$, yielding the explicit formula.

\end{proof}

\subsection{Generating functions}
In this section, we derive both the ordinary and exponential generating functions for the $(r,d)$-Stirling numbers of the second kind.
\begin{theorem}
    For integers $k \geq r \geq 0$, the ordinary generating function for the $(r,d)$-Stirling numbers of the second kind is given by
    \begin{align}
        \sum_{n\geq k} { n \brace k }_{r,d} x^n = \frac{x^k}{\prod_{j=r}^{k} (1-(j-d)x)}.
    \end{align}
\end{theorem}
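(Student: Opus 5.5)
The plan is to prove the formula by induction on $k$, starting at $k=r$. Write
\[
G_k(x)=\sum_{n\geq k}{n\brace k}_{r,d}x^n .
\]
We translate the recurrence \eqref{rec:3} into the functional relation
\[
G_k(x)=x\,G_{k-1}(x)+(k-d)\,x\,G_k(x)\qquad (k>r),
\]
which rearranges to
\[
G_k(x)=\frac{x}{1-(k-d)x}\,G_{k-1}(x).
\]
Iterating this from the base case produces the telescoping product in the statement.

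\emph{Base case $k=r$.} We use the special case ${n\brace r}_{r,d}=(r-d)^{n-r}$ listed after the second recurrence theorem. Equivalently, \eqref{rec:3} with $k=r$ gives ${n\brace r}_{r,d}=(r-d){n-1\brace r}_{r,d}$, because ${n-1\brace r-1}_{r,d}=0$ by the boundary condition for $k<r$, and ${r\brace r}_{r,d}=1$. Summing the geometric series then gives
\[
G_r(x)=\sum_{n\ge r}(r-d)^{n-r}x^n=\frac{x^r}{1-(r-d)x},
\]
which is the claimed formula for $k=r$, where the product has the single factor $j=r$.

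\emph{Inductive step, $k>r$.} We multiply \eqref{rec:3} by $x^n$ and sum over $n\ge k$. The main care needed is in the boundary terms.
\begin{itemize}
\item The first piece is $\sum_{n\ge k}{n-1\brace k-1}_{r,d}x^n$. After shifting $n\mapsto n+1$ this is exactly $x\,G_{k-1}(x)$, since $G_{k-1}$ starts at $n=k-1$.
\item The second piece is $(k-d)\sum_{n\ge k}{n-1\brace k}_{r,d}x^n=(k-d)x\sum_{m\ge k-1}{m\brace k}_{r,d}x^m$. The extra term $m=k-1$ vanishes because ${k-1\brace k}_{r,d}=0$, as no partition of $k-1$ elements has $k$ nonempty blocks. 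So this piece equals $(k-d)x\,G_k(x)$.
\item We must also confirm that \eqref{rec:3} holds at $n=k$. There it reads $1={k-1\brace k-1}_{r,d}+0$, which is consistent with ${n\brace n}_{r,d}=1$.
\end{itemize}
With the functional relation established, the induction hypothesis $G_{k-1}(x)=x^{k-1}/\prod_{j=r}^{k-1}(1-(j-d)x)$ gives
\[
G_k(x)=\frac{x}{1-(k-d)x}\cdot\frac{x^{k-1}}{\prod_{j=r}^{k-1}(1-(j-d)x)}=\frac{x^k}{\prod_{j=r}^{k}(1-(j-d)x)} .
\]

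The only real obstacle is the boundary bookkeeping: checking that the shifted sums start at the right indices and that no stray initial terms appear. The facts ${k-1\brace k}_{r,d}=0$ and ${n\brace k}_{r,d}=0$ for $k<r$ handle this. All manipulations are identities of formal power series, so no convergence issue arises.
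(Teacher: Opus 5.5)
Your proposal is correct and follows the paper's proof essentially step for step. Both convert \eqref{rec:3} into $G_k(x)=\frac{x}{1-(k-d)x}\,G_{k-1}(x)$, take the base case $G_r(x)=x^r/(1-(r-d)x)$ from ${n\brace r}_{r,d}=(r-d)^{n-r}$, and iterate (your induction on $k$ is the paper's backward substitution), with your explicit boundary checks (the vanishing of ${k-1\brace k}_{r,d}$ and the case $n=k$) making precise what the paper leaves implicit and, like the paper, tacitly relying on the standing assumption $r>d$ that the base case needs.
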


\begin{proof}
    Let $S_k(x) = \sum_{n\geq k} { n \brace k }_{r,d} x^n$ denote the generating function for a fixed $k$. Using the recurrence relation \eqref{rec:3} by multiplying by  $x^n$ and summing over $n \geq k$, we obtain
    \begin{align*}
        S_{k}(x) &= x \sum_{n\geq k} { n-1 \brace k-1 }_{r,d} x^{n-1} + (k-d)x \sum_{n\geq k} { n-1 \brace k }_{r,d} x^{n-1} \\
        S_{k}(x) &= x S_{k-1}(x) + (k-d)x S_{k}(x).
    \end{align*}
    Rearranging the terms to solve for $S_k(x)$, we find
    \begin{align*}
        S_{k}(x) (1 - (k-d)x) &= x S_{k-1}(x) \\
        S_{k}(x) &= \frac{x}{1-(k-d)x} S_{k-1}(x).
    \end{align*}
    We now determine the base case $S_r(x)$. Since ${n \brace r}_{r,d} = (r-d)^{n-r}$ for $n \ge r$, we have
    \begin{align*}
        S_r(x) &= \sum_{n=r}^{\infty} (r-d)^{n-r} x^n = x^r \sum_{j=0}^{\infty} ((r-d)x)^j = \frac{x^r}{1-(r-d)x}.
    \end{align*}
    By applying backward substitution from $k$ down to $r$
    \begin{align*}
        S_{k}(x) &= \frac{x}{1-(k-d)x} \cdot \frac{x}{1-(k-1-d)x} \cdots \frac{x}{1-(r+1-d)x} S_r(x) \\
        &= \left( \prod_{j=r+1}^{k} \frac{x}{1-(j-d)x} \right) \frac{x^r}{1-(r-d)x} \\
        &= \frac{x^k}{\prod_{j=r}^{k} (1-(j-d)x)}.
    \end{align*}
    This completes the proof.
\end{proof}

%\subsection{Exponential Generating Function}
\begin{theorem}
	 The exponential generating function for the $(r,d)$-Stirling numbers of the second kind is given by,
	\begin{align}\label{gen:exp}
		\sum_{n\geq 0} {n+r \brace k+r }_{r,d} \frac{x^n}{n!} =
		\frac{1}{k!}(e^x - 1)^{k} e^{(r-d)x} ,
	\end{align}
\end{theorem}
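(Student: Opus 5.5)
The quickest route is through Theorem~\ref{thm:thmrp}, which writes ${n+r \brace k+r}_{r,d}$ as a binomial convolution of two simple sequences. We then multiply exponential generating functions. Replacing $n$ by $n+r$ and $k$ by $k+r$ in Theorem~\ref{thm:thmrp} gives
\[
{n+r \brace k+r}_{r,d} = \sum_{i=0}^{n} \binom{n}{i} {n-i \brace k} (r-d)^{i}.
\]
The right-hand side is exactly the coefficient of $x^n/n!$ in the product of the exponential series $A(x)=\sum_{i\ge 0}(r-d)^i x^i/i! = e^{(r-d)x}$ and $B(x)=\sum_{m\ge 0}{m\brace k}x^m/m!$. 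This follows because a product of two exponential generating functions has coefficients equal to the binomial convolution of the coefficient sequences.

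It then remains to identify $B(x)$. This is the classical exponential generating function of the Stirling numbers of the second kind in one fixed column:
\[
\sum_{m\ge 0}{m\brace k}\frac{x^m}{m!}=\frac{(e^x-1)^k}{k!}.
\]
If a self-contained justification is wanted, take the explicit formula \eqref{eq:explicit} with $r=d=0$, which is the classical ${m\brace k}=\frac1{k!}\sum_j(-1)^j\binom kj(k-j)^m$. Multiplying by $x^m/m!$ and summing over $m$ gives $\frac1{k!}\sum_j(-1)^j\binom kj e^{(k-j)x}=\frac{1}{k!}(e^x-1)^k$ by the binomial theorem. Multiplying $A$ and $B$ yields \eqref{gen:exp}.

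A second route, which also serves as a check, starts directly from \eqref{eq:explicit} with $n\to n+r$ and $k\to k+r$:
\[
{n+r\brace k+r}_{r,d}=\frac1{k!}\sum_{j=0}^{k}(-1)^j\binom kj (k+r-d-j)^n.
\]
Summing against $x^n/n!$ gives $\frac1{k!}\sum_j(-1)^j\binom kj e^{(k-j)x}e^{(r-d)x}$, which equals the claimed right-hand side.

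The only point needing care is the range of validity. Theorem~\ref{thm:thmrp} and \eqref{eq:explicit} are stated for $n\ge k$ in the shifted indexing. For $n<k$ the left coefficient ${n+r\brace k+r}_{r,d}$ vanishes, since there are fewer elements than blocks. The right-hand side also has zero coefficient of $x^n$ there, because $(e^x-1)^k=x^k+\cdots$. So both sides agree for all $n\ge 0$. I expect no real obstacle beyond making this boundary check explicit and noting the hypothesis $r>d$ (or $r\ge d$), under which the formulas used are valid.
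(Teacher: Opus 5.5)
Your proposal is correct and is essentially the paper's proof. Both arguments rewrite ${n+r\brace k+r}_{r,d}$ via Theorem~\ref{thm:thmrp} as a binomial convolution of $(r-d)^i$ with ${m\brace k}$, then recognize the product $e^{(r-d)x}\cdot\frac{1}{k!}(e^x-1)^k$; the paper does this by explicitly interchanging the sums. Your second route via \eqref{eq:explicit} and your explicit check of $n<k$ are sound additions, with one nit: \eqref{eq:explicit} is only stated for $r>d$, so for the column generating function cite the classical formula for ${m\brace k}$ directly rather than the case $r=d=0$.
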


\begin{proof}
Using Theorem \ref{thm:thmrp}, we have
	\[
		\begin{aligned}
			\sum_{n \geq 0} {n+r\brace  k+r}_{r,d} \frac{x^n}{n!} & = \sum_{n \geq 0} \sum_{i=0}^n \binom{n}{i}{i\brace k}(r-d)^{n-i} \frac{x^n}{n!}           \\
			 & = \sum_{i\geq k} {i\brace k}  \sum_{n \geq i} \binom{n}{i} \frac{((r-d)x)^{n}}{n!}         \\
			 & = \sum_{i\geq k} {i\brace k} \frac{x^i}{i!}  \sum_{n \geq i} \frac{((r-d)x)^{n-i}}{(n-i)!} \\
			 & = \sum_{i\geq k} {i\brace k} \frac{x^i}{i!}  e^{(r-d)x}                                    \\
			 & = e^{(r-d)x} \frac{(e^x-1)^k}{k!}.
		\end{aligned}\]
\end{proof}

%\subsection{Double Generating Function}
\begin{theorem}The double generating function of the $(r,d)$-Stirling numbers of the second kind is given by

	\begin{align}
		\sum_{n,k} { n+r \brace  k+r }_{r,d} \frac{x^n}{n!} t^k = e^{t(e^x - 1) + (r-d)x}.
	\end{align}
\end{theorem}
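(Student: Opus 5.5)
The plan is to obtain the double generating function directly from the exponential generating function \eqref{gen:exp}. We multiply by $t^k$ and sum over $k\ge 0$, using the convention that ${n+r\brace k+r}_{r,d}$ vanishes when $n<k$. Then only finitely many terms contribute to each coefficient of $x^n t^k$, so all rearrangements are legitimate in the ring of formal power series in $x$ and $t$.

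Concretely, I would write
\begin{align*}
\sum_{n,k} {n+r \brace k+r}_{r,d}\frac{x^n}{n!}t^k
= \sum_{k\ge 0} t^k \sum_{n\ge 0}{n+r \brace k+r}_{r,d}\frac{x^n}{n!}
= \sum_{k\ge 0} t^k \frac{(e^x-1)^k}{k!}\,e^{(r-d)x}.
\end{align*}
The factor $e^{(r-d)x}$ does not depend on $k$, so it can be pulled out of the sum. The remaining series is $\sum_{k\ge0}\frac{(t(e^x-1))^k}{k!}=e^{t(e^x-1)}$. Multiplying the two exponentials gives $e^{t(e^x-1)+(r-d)x}$, which is the claimed identity.

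The only point that needs care is justifying the interchange of the sums over $n$ and $k$. Since $e^x-1$ has zero constant term, $(e^x-1)^k=O(x^k)$. Hence for fixed $n$ only the terms with $k\le n$ contribute, and the composition $e^{t(e^x-1)}$ is a well-defined formal power series. As a cross-check, one could instead start from Theorem~\ref{thm:thmrp}. Summing first in $k$ gives $\sum_k {i\brace k}t^k$, the Touchard polynomial, whose exponential generating function is $e^{t(e^x-1)}$. The binomial convolution with $(r-d)^{n-i}$ then contributes the factor $e^{(r-d)x}$, and this route gives the same result.
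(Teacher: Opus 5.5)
Your proposal is correct and takes the same approach as the paper, which also multiplies the exponential generating function \eqref{gen:exp} by $t^k$ and sums over $k$. Your remark that $(e^x-1)^k$ has order $x^k$, which justifies interchanging the sums over $n$ and $k$ as formal power series, supplies a detail the paper leaves implicit.
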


\begin{proof}
	% Your proof here
	The result follows directly from the exponential generating function in the formula \eqref{gen:exp} by summing over all $k$.
\end{proof}

\begin{theorem}
Let $n, m, r, p, d$ be integers such that $n \ge m > p \ge r > d$. The $(r,d)$-Stirling numbers of the second kind satisfy the following convolution identity:
\[
\left\{ \begin{matrix} n \\ m \end{matrix} \right\}_{r,d} = \sum_{k=0}^{n-m} \left\{ \begin{matrix} p+k \\ p \end{matrix} \right\}_{r,d} \left\{ \begin{matrix} n-k \\ m \end{matrix} \right\}_{p+1, d}.
\]
\end{theorem}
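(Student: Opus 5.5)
We prove the identity by comparing ordinary generating functions in $n$, using the product formula for $\sum_{n\ge k}{n\brace k}_{r,d}x^n$ established above. The key observation is that the denominator $\prod_{j=r}^{m}(1-(j-d)x)$ splits at $j=p$ into two pieces, and each piece is the generating function of one factor in the sum.

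\emph{First factor.} Apply the ordinary generating function theorem with $k=p$ (allowed since $p\ge r$). Shifting the summation index by $p$ gives
\[
\sum_{k\ge 0}{p+k\brace p}_{r,d}x^{k}=x^{-p}\sum_{N\ge p}{N\brace p}_{r,d}x^{N}=\frac{1}{\prod_{j=r}^{p}(1-(j-d)x)}.
\]

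\emph{Second factor.} Apply the same theorem with $r$ replaced by $p+1$ and $k$ replaced by $m$. This is allowed because $m\ge p+1$ and $p+1>d$. It gives
\[
\sum_{N\ge m}{N\brace m}_{p+1,d}x^{N}=\frac{x^{m}}{\prod_{j=p+1}^{m}(1-(j-d)x)}.
\]

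\emph{Product.} Multiplying the two series, the two products in the denominators merge into $\prod_{j=r}^{m}(1-(j-d)x)$. So the product equals $x^m/\prod_{j=r}^{m}(1-(j-d)x)$, which is $\sum_{n\ge m}{n\brace m}_{r,d}x^{n}$ by the same theorem with $k=m$.

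\emph{Comparing coefficients.} The coefficient of $x^n$ in the product is the Cauchy convolution
\[
\sum_{k\ge0}{p+k\brace p}_{r,d}{n-k\brace m}_{p+1,d}.
\]
Terms with $n-k<m$ vanish, so the sum truncates at $k=n-m$. Equating this with ${n\brace m}_{r,d}$ gives the stated identity.

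\emph{Main obstacle.} The only step needing care is the bookkeeping of boundary conventions. We must check that ${N\brace m}_{p+1,d}=0$ for $N<m$, and that the generating function theorem really applies with lower index $p+1$. Both follow from the hypotheses $m>p\ge r>d$ and the initial conditions of recurrence~\eqref{rec:3}.

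\emph{Optional bijective proof.} One could also prove the identity combinatorially. Let $n-k$ be the position of the last element that opens a new block among blocks $p+1,\dots,m$ when the elements are scanned in reverse order. The part before the split is then counted by ${p+k\brace p}_{r,d}$. The generating-function argument is shorter and fully rigorous, so we use it.
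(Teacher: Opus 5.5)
Your proof is correct and follows essentially the same route as the paper. Both arguments split $\prod_{j=r}^{m}(1-(j-d)x)$ at $j=p$, identify the two pieces as the ordinary generating functions of ${p+k\brace p}_{r,d}$ and ${N\brace m}_{p+1,d}$, and read off the coefficient of $x^n$ as a Cauchy convolution; the only difference is that you attach the shift $x^{-p}$ to the first factor rather than the second, which changes nothing.
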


\begin{proof}
We utilize the ordinary generating function for the $(r,d)$-Stirling numbers:
\[
\sum_{n=m}^{\infty} \left\{ \begin{matrix} n \\ m \end{matrix} \right\}_{r,d} x^n = \frac{x^m}{\prod_{j=r}^{m} (1 - (j-d)x)}.
\]
Splitting the product in the denominator at $p$ and factoring the numerator as $x^m = x^p \cdot x^{m-p}$, we can rewrite this as the product of two distinct generating functions:
\[
\sum_{n=m}^{\infty} \left\{ \begin{matrix} n \\ m \end{matrix} \right\}_{r,d} x^n = \left( \frac{x^p}{\prod_{j=r}^{p} (1 - (j-d)x)} \right) \left( \frac{x^{m-p}}{\prod_{j=p+1}^{m} (1 - (j-d)x)} \right).
\]
The first factor is exactly the generating function for $\left\{ \begin{matrix} i \\ p \end{matrix} \right\}_{r,d}$, while the second factor is $x^{-p}$ times the generating function for $\left\{ \begin{matrix} j \\ m \end{matrix} \right\}_{p+1,d}$. Extracting the coefficient of $x^n$ by expressing this product as a discrete convolution immediately yields the desired identity.
\end{proof}

\begin{theorem} Let $n\geq k\geq p$, where $d$ and $p$ are non-negative integers with $p> d$. Then

	\begin{align}\label{theorem1}
		{ n+r \brace k+r}_{r,d} = \sum_{i=0}^{n-k} \binom{n}{i}{n+p-i\brace k+p}_{p,d}(r-p)^i
	\end{align}

\end{theorem}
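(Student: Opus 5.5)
We prove the identity with the exponential generating function \eqref{gen:exp}. That formula states that the egf in $n$ of ${n+r\brace k+r}_{r,d}$ is $\frac{1}{k!}(e^x-1)^k e^{(r-d)x}$. The key step is to factor the exponential as
\[
e^{(r-d)x} = e^{(p-d)x}\, e^{(r-p)x}.
\]
The first factor, combined with $\frac{1}{k!}(e^x-1)^k$, is exactly the egf of ${n+p\brace k+p}_{p,d}$, again by \eqref{gen:exp} with $r$ replaced by $p$. This substitution is legitimate because $p>d$. The second factor is $\sum_{i\ge 0}(r-p)^i x^i/i!$. Hence
\[
\sum_{n\ge 0}{n+r\brace k+r}_{r,d}\frac{x^n}{n!} = \left(\sum_{m\ge 0}{m+p\brace k+p}_{p,d}\frac{x^m}{m!}\right)\left(\sum_{i\ge 0}(r-p)^i\frac{x^i}{i!}\right).
\]

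Next we extract the coefficient of $x^n/n!$ from both sides. The product of two egfs is the binomial convolution, which gives
\[
{n+r\brace k+r}_{r,d}=\sum_{i=0}^{n}\binom{n}{i}{n-i+p\brace k+p}_{p,d}(r-p)^i .
\]
We then truncate the sum. The factor ${n+p-i\brace k+p}_{p,d}$ vanishes whenever $n+p-i<k+p$, that is, whenever $i>n-k$. So the range reduces to $0\le i\le n-k$, which is the stated formula.

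Alternatively, one could substitute the expansion of Theorem~\ref{thm:thmrp} on both sides. That gives the expansions $\sum_j\binom{n}{j}{n-j\brace k}(r-d)^j$ and $\sum_j \binom{n-i}{j}{n-i-j\brace k}(p-d)^j$. One would then apply the binomial theorem $(r-d)^j=\sum_i \binom{j}{i}(r-p)^i(p-d)^{j-i}$ and the identity $\binom{n}{j}\binom{j}{i}=\binom{n}{i}\binom{n-i}{j-i}$. The egf route avoids this bookkeeping.

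The main point requiring care is the range of validity of \eqref{gen:exp}. It is proved via Theorem~\ref{thm:thmrp}, which we apply for both $r$ and $p$. The hypothesis $p>d$ keeps us in the regime treated in the paper. We need no sign condition on $r-p$, since only the formal identity $e^{(r-d)x}=e^{(p-d)x}e^{(r-p)x}$ is used. If $r$ is small we also need $r>d$ (or at least the formula of Theorem~\ref{thm:thmrp} for $r$), which we take as implicitly assumed, as in the rest of the paper.
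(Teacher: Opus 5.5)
Your proof is correct, but it follows a different route from the paper's.

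The paper obtains the identity in one line by specializing the convolution Theorem~\ref{thm:maintheorem1}, replacing $p$ by $r-p+d$. The second parameter $r-(r-p+d)+d$ then becomes $p$. The first factor ${(r-p+d)+i\brace r-p+d}_{r-p+d,d}$ collapses to $(r-p)^i$ via the special value ${m\brace r'}_{r',d}=(r'-d)^{m-r'}$.

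You instead factor the exponential generating function \eqref{gen:exp} as $e^{(r-d)x}=e^{(p-d)x}e^{(r-p)x}$ and read off a binomial convolution.

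\textbf{What the paper's route costs.} It stays inside the convolution framework, but Theorem~\ref{thm:maintheorem1} requires $r\ge r-p+d>d$, i.e.\ $r>p$; the case $r=p$ is trivial. So the paper's argument only establishes the identity for $r\ge p$, even though the statement imposes no relation between $r$ and $p$.

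\textbf{What your route buys.} You use only a formal identity of exponentials, so your argument does not depend on the sign of $r-p$. It therefore also covers $p>r$, where $(r-p)^i$ alternates in sign. For instance, $r=2$, $p=3$, $d=1$, $n=2$, $k=1$ gives ${4\brace 3}_{2,1}=3={5\brace 4}_{3,1}-2{4\brace 4}_{3,1}$. You also justify the upper limit $n-k$ explicitly, through the vanishing of ${n+p-i\brace k+p}_{p,d}$ for $i>n-k$. The paper leaves this implicit, since Theorem~\ref{thm:maintheorem1} naturally produces the range $0\le i\le n$.

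\textbf{No circularity.} \eqref{gen:exp} is derived from Theorem~\ref{thm:thmrp}, which uses Theorem~\ref{thm:maintheorem1} only at the unproblematic value $p=r$. Your approach therefore relies on the convolution theorem only where its hypotheses certainly hold.
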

\begin{proof}
Applying Theorem~\ref{thm:maintheorem1} with $p$ replaced by $r-p+d$, we obtain
\begin{align*}
	{n+r \brace k+r}_{r,d} = \sum_{i=0}^{n-k} \binom{n}{i} {(r-p+d)+i \brace r-p+d}_{r-p+d,d} {n+p-i \brace k+p}_{p,d}.
\end{align*}
Since ${(r-p+d)+i \brace r-p+d}_{r-p+d,d} = (r-p+d-d)^i = (r-p)^i$ by the special case ${m \brace r'}_{r',d} = (r'-d)^{m-r'}$, the identity follows.
\end{proof}

\begin{theorem}\label{cor:1}
For all non-negative integers $n$ and $r > d \geq 0$, we have
	\begin{align*}
		(x+r-d)^{n} =  \sum_{k=0}^{n} {n+r \brace k+r}_{r,d} (x)_{k}.
	\end{align*}
\end{theorem}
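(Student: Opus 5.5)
We will prove the identity by showing that both sides, viewed as polynomials in $x$, agree. The cleanest route uses the reduction formula of Theorem~\ref{thm:thmrp} together with the classical expansion $x^{m}=\sum_{k}{m\brace k}(x)_k$.

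First, expand the left-hand side binomially as $(x+(r-d))^{n}=\sum_{i=0}^{n}\binom{n}{i}(r-d)^{n-i}x^{i}$. Next, replace each power $x^{i}$ by its falling factorial expansion $x^{i}=\sum_{k=0}^{i}{i\brace k}(x)_k$, where ${i\brace k}$ are the ordinary Stirling numbers. Interchanging the order of summation gives
\begin{align*}
(x+r-d)^{n}=\sum_{k=0}^{n}(x)_k\sum_{i=k}^{n}\binom{n}{i}{i\brace k}(r-d)^{n-i}.
\end{align*}
Now apply Theorem~\ref{thm:thmrp} with $n$ replaced by $n+r$ and $k$ by $k+r$:
\begin{align*}
{n+r\brace k+r}_{r,d}=\sum_{i=0}^{n}\binom{n}{i}{n-i\brace k}(r-d)^{i}.
\end{align*}
The substitution $i\mapsto n-i$ turns this into $\sum_{i}\binom{n}{i}{i\brace k}(r-d)^{n-i}$, using $\binom{n}{n-i}=\binom{n}{i}$. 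Terms with $i<k$ vanish, so this is exactly the inner sum above, and the identity follows.

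The only delicate point is the bookkeeping of summation ranges. One must check that ${i\brace k}=0$ for $i<k$, so that the sum over $i$ may run from $0$ or from $k$ without change. One must also confirm that Theorem~\ref{thm:thmrp} is valid for every $0\le k\le n$, including $k=0$, where ${n-i\brace 0}=\delta_{n,i}$ correctly gives $(r-d)^{n}$.

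As a sanity check, one could alternatively argue from the exponential generating function \eqref{gen:exp}. Multiply it by $(x)_k$ in the form of the coefficient extraction $\sum_k \frac{(e^{t}-1)^k}{k!}(x)_k=e^{xt}$, and sum over $k$. This gives $\sum_n \frac{t^n}{n!}\sum_k{n+r\brace k+r}_{r,d}(x)_k=e^{(r-d)t}e^{xt}=e^{(x+r-d)t}$. Comparing coefficients of $t^n/n!$ yields the claim at once. We expect this generating-function argument to be the shortest presentation, and either route works.
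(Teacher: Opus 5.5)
Your proof is correct, but it follows a different route from the paper's. The paper argues by induction on $n$ directly from the triangular recurrence \eqref{rec:3}. It multiplies the hypothesis by $x+r-d=(x-k)+(k+r-d)$ to get $(x+r-d)(x)_k=(x)_{k+1}+(k+r-d)(x)_k$. The interior coefficients then recombine via \eqref{rec:3}, and the two boundary terms are handled by ${n+r\brace r}_{r,d}=(r-d)^n$ and ${n+r\brace n+r}_{r,d}=1$. You instead do a direct, non-inductive computation. You expand $(x+(r-d))^n$ binomially, insert the classical expansion $x^i=\sum_k{i\brace k}(x)_k$, swap the sums, and recognize the inner sum as Theorem~\ref{thm:thmrp} after $n\mapsto n+r$, $k\mapsto k+r$, $i\mapsto n-i$. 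Your index bookkeeping, including the $k=0$ case, is right.

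Your version is shorter and shows the structure: the identity is the classical one transformed by a binomial transform with weight $r-d$, which is exactly what Theorem~\ref{thm:thmrp} encodes. The paper's induction is more self-contained, needing only the recurrence and the two boundary values. Your argument inherits the dependency chain Theorem~\ref{thm:thmrp} $\leftarrow$ Theorem~\ref{thm:maintheorem1} (with $p=r$) $\leftarrow$ \eqref{wei2008}.

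Your generating-function ``sanity check'' is not logically independent of your main argument. In the paper, \eqref{gen:exp} is itself derived from Theorem~\ref{thm:thmrp}, so that route is the same computation in generating-function form rather than a second confirmation.
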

\begin{proof}
We proceed by induction on $n$. For $n=0$, the left-hand side equals $1$, while the right-hand side reduces to ${r \brace r}_{r,d}(x)_0 = 1$.

Assume the identity holds for $n-1$:
\[
(x+r-d)^{n-1} = \sum_{k=0}^{n-1} {n-1+r \brace k+r}_{r,d} (x)_k.
\]
Multiplying both sides by $(x+r-d)$ and writing $x+r-d = (x-k) + (k+r-d)$, we obtain
\[
(x+r-d)(x)_k = (x)_{k+1} + (k+r-d)(x)_k.
\]
Substituting into the induction hypothesis yields
\begin{align*}
(x+r-d)^{n} &= \sum_{k=0}^{n-1} {n-1+r \brace k+r}_{r,d} \bigl[(x)_{k+1} + (k+r-d)(x)_k\bigr] \\
&= \sum_{k=1}^{n} {n-1+r \brace k-1+r}_{r,d} (x)_k + \sum_{k=0}^{n-1} (k+r-d){n-1+r \brace k+r}_{r,d} (x)_k.
\end{align*}
For $1 \leq k \leq n-1$, the recurrence~\eqref{rec:3} gives
\[
{n-1+r \brace k-1+r}_{r,d} + (k+r-d){n-1+r \brace k+r}_{r,d} = {n+r \brace k+r}_{r,d}.
\]
The remaining boundary terms are: for $k=0$, $(r-d){n-1+r \brace r}_{r,d} = (r-d)^{n} = {n+r \brace r}_{r,d}$; for $k=n$, ${n-1+r \brace n-1+r}_{r,d} = 1 = {n+r \brace n+r}_{r,d}$. Combining all terms completes the induction.
\end{proof}

\subsection{Reduction identity and derived properties}

We establish a fundamental relationship connecting the $(r,d)$-Stirling numbers of the second kind to the classical $r$-Stirling numbers.

\begin{theorem}\label{th:relation}
For integers $n \ge k \ge r > d$, the $(r,d)$-Stirling numbers of the second kind satisfy the reduction identity
    \begin{align}\label{eq:relation}
        {n \brace k}_{r,d} = {n-d \brace k-d}_{r-d}.
    \end{align}
\end{theorem}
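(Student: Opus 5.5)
The most direct route is to compare the explicit formula \eqref{eq:explicit} with the classical explicit formula \eqref{eq:rst1} for $r$-Stirling numbers. Put $n'=n-d$, $k'=k-d$, $r'=r-d$. The hypothesis $n\ge k\ge r>d$ gives $n'\ge k'\ge r'\ge 1$, so the right-hand side of \eqref{eq:relation} is a legitimate $r'$-Stirling number. We then have $k'-r'=k-r$ and $n'-r'=n-r$. Formula \eqref{eq:rst1} therefore yields
\[
{n-d \brace k-d}_{r-d}=\frac{1}{(k-r)!}\sum_{j=0}^{k-r}(-1)^j\binom{k-r}{j}\,(k-d-j)^{n-r}.
\]
This is term-by-term the right-hand side of \eqref{eq:explicit}, so the identity follows immediately.

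Because the explicit formula's inclusion--exclusion proof is somewhat informal, I would also give a self-contained check by induction on $n$, using only the recurrences. Set $a(n,k)={n-d\brace k-d}_{r-d}$. The $r$-Stirling recurrence gives
\[
a(n,k)={n-1-d\brace k-1-d}_{r-d}+(k-d){n-1-d\brace k-d}_{r-d}=a(n-1,k-1)+(k-d)\,a(n-1,k).
\]
This is exactly the recurrence \eqref{rec:3} satisfied by ${n\brace k}_{r,d}$. For the boundary conditions:
\begin{itemize}
\item At $n=r$ we have $a(r,k)={r-d\brace k-d}_{r-d}=\delta_{k,r}$, which agrees with ${r\brace k}_{r,d}$.
\item For $k<r$ both sides vanish, since $k-d<r-d$.
\item Along $k=r$ we have $a(n,r)={n-d\brace r-d}_{r-d}=(r-d)^{n-r}$, matching the special case ${n\brace r}_{r,d}=(r-d)^{n-r}$.
\end{itemize}
Induction on $n$, using the common recurrence for $k>r$, then closes the argument.

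A third, combinatorial confirmation is to check that the ordinary generating functions agree. Shifting the classical one for $r$-Stirling numbers gives $x^{k}\big/\prod_{j=r}^{k}(1-(j-d)x)$, which coincides with the previously proved generating function.

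The only delicate point is the bookkeeping of the index range. One must check that $r-d\ge 1$, so that the classical $r$-Stirling boundary conditions (${n\brace k}_{r}=0$ for $n<r$, and $\delta_{k,r}$ at $n=r$) transfer correctly after the shift by $d$. One must also verify that the degenerate case $k=r$, where the classical formula reduces to $(r-d)^{n-r}$, is consistent. Beyond this, the proof is a direct substitution.
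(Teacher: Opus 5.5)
Your main argument is exactly the paper's proof: substitute $n-d$, $k-d$, $r-d$ into \eqref{eq:rst1} and observe that the result coincides term by term with \eqref{eq:explicit}. Your supplementary induction via the shared recurrence \eqref{rec:3} and matching boundary values is also correct, and it has the small advantage of not depending on the inclusion--exclusion derivation of \eqref{eq:explicit}.
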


\begin{proof}
    This result follows immediately by comparing the explicit formula~\eqref{eq:explicit} with the known closed form of the $r$-Stirling numbers given in~\eqref{eq:rst1}.
\end{proof}

Theorem~\ref{th:relation} establishes a direct correspondence between the $(r,d)$-Stirling numbers and the standard $r$-Stirling numbers via a uniform parameter shift. This reduction identity allows us to transfer established properties from the standard theory to our generalized setting. In the results that follow, unless a proof is explicitly given, the result is obtained by applying the reduction identity~\eqref{eq:relation} to the corresponding identity for standard $r$-Stirling numbers found in~\cite{broder1984,nyul2015,mezo2008,mihoubi2016}.

\begin{corollary}
For integers $n \ge r > d \ge 0$, we have
	\begin{align}
		(x-r+d)^{n-r} = \sum_{k=r}^{n} (-1)^{n-k} {n \brace k}_{r,d} x^{(k-r)},
	\end{align}
where $x^{(m)} = x(x+1)\cdots(x+m-1)$ denotes the rising factorial.
\end{corollary}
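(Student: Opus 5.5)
The plan is to derive the identity from Theorem~\ref{cor:1} by the substitution $x \mapsto -x$, together with the standard relation between falling and rising factorials. This is essentially what one gets by applying the reduction identity~\eqref{eq:relation} to the classical rising-factorial expansion of $r$-Stirling numbers, but the direct route avoids any reference to outside results.

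First, reindex Theorem~\ref{cor:1} by writing $n$ in place of $n-r$ (so $n\ge r$) and $k$ in place of $k-r$. This gives
\[
(x+r-d)^{n-r} = \sum_{k=r}^{n} {n \brace k}_{r,d} (x)_{k-r}.
\]
Next, replace $x$ by $-x$; this is legitimate since both sides are polynomials in $x$. We use
\[
(-x)_m = (-x)(-x-1)\cdots(-x-m+1) = (-1)^m x^{(m)},
\]
which turns the identity into
\[
(-1)^{n-r}(x-r+d)^{n-r} = \sum_{k=r}^{n} (-1)^{k-r} {n \brace k}_{r,d}\, x^{(k-r)}.
\]
Finally, multiply both sides by $(-1)^{n-r}$. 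Since $(-1)^{n-r}(-1)^{k-r} = (-1)^{n+k-2r} = (-1)^{n-k}$, this yields the claimed formula.

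The only point requiring care is the sign bookkeeping in the last step, together with the index shift at the start, which must correctly match the summation range $k=r,\dots,n$ with $k'=0,\dots,n-r$ in Theorem~\ref{cor:1}. There is no genuine obstacle beyond this.
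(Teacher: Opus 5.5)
Your proposal is correct and follows essentially the same argument as the paper: substitute $x\mapsto -x$ in Theorem~\ref{cor:1}, use $(-x)_m=(-1)^m x^{(m)}$, and fix the overall sign. The only difference is ordering. You shift the indices $n\mapsto n-r$, $k\mapsto k-r$ first and multiply by $(-1)^{n-r}$, whereas the paper multiplies by $(-1)^n$ and reindexes at the end.
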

\begin{proof}
From Theorem~\ref{cor:1}, replacing $x$ by $-x$ and using the relation $(-x)_k = (-1)^k x^{(k)}$, we obtain
\[
(-x+r-d)^n = \sum_{k=0}^{n} {n+r \brace k+r}_{r,d} (-x)_k = \sum_{k=0}^{n} (-1)^k {n+r \brace k+r}_{r,d}\, x^{(k)}.
\]
Multiplying both sides by $(-1)^n$ yields
\[
(x-r+d)^n = \sum_{k=0}^{n} (-1)^{n-k} {n+r \brace k+r}_{r,d}\, x^{(k)}.
\]
Shifting the index via $n \mapsto n-r$ and $k \mapsto k-r$ gives the stated identity.
\end{proof}

\begin{theorem}
For integers $n \ge k \ge r > d$, we have
    \begin{equation}
        {n+1 \brace k+1}_{r,d} = \sum_{j=k-r}^{n-r} (k - d + 1)^{n-r-j} {j+r \brace k}_{r,d}.
    \end{equation}
\end{theorem}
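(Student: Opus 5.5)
The plan is to unroll the triangular recurrence \eqref{rec:3} in the upper index, keeping the lower index fixed at $k+1$. Applying \eqref{rec:3} to ${m+1 \brace k+1}_{r,d}$ gives
\[
{m+1 \brace k+1}_{r,d} = {m \brace k}_{r,d} + (k+1-d){m \brace k+1}_{r,d}.
\]
Iterate this, starting at $m=n$ and replacing each occurrence of ${m \brace k+1}_{r,d}$ by the same expansion with $m$ lowered by one. After $t$ steps we have
\[
{n+1 \brace k+1}_{r,d}=\sum_{s=0}^{t-1}(k-d+1)^{s}{n-s \brace k}_{r,d}+(k-d+1)^{t}{n+1-t \brace k+1}_{r,d}.
\]

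The iteration stops when the remainder term vanishes. This happens once $n+1-t<k+1$, that is, at $t=n-k+1$. Here I use the boundary convention that ${m \brace j}_{r,d}=0$ for $m<j$: a set of $m$ elements cannot be partitioned into more than $m$ nonempty blocks. Whenever $k\ge r$ the recurrence is valid throughout the range used, since all lower indices involved are $k$ or $k+1$, both at least $r$. It is worth recording that the case $k=r$ causes no trouble: ${m \brace r}_{r,d}=(r-d)^{m-r}$ is consistent with \eqref{rec:3}, because ${m-1 \brace r-1}_{r,d}=0$.

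Next substitute $j=n-s-r$, so that ${n-s \brace k}_{r,d}={j+r \brace k}_{r,d}$ and $(k-d+1)^{s}=(k-d+1)^{n-r-j}$. As $s$ runs from $0$ to $n-k$, the index $j$ runs from $n-r$ down to $k-r$. This gives exactly the stated sum.

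Alternatively, the identity can be checked through the ordinary generating function. Its factor $x/(1-(k+1-d)x)$ relates $S_{k+1}$ to $S_k$, and expanding this factor as a geometric series and comparing coefficients of $x^{n+1}$ gives the same convolution. This provides a cross-check on the exponent bookkeeping.

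The only real obstacle is justifying the boundary conventions. The first is that ${m \brace k+1}_{r,d}=0$ for $m=k$, so that the telescoping truncates. The second is that the recurrence holds down to $m=k$. Both follow from the combinatorial definition, or equally from the explicit formula \eqref{eq:explicit}: for $n-r<k-r$ that formula vanishes, because the sum is the $(k-r)$-th finite difference of a polynomial of lower degree.
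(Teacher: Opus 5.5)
Your argument is correct, but it is phrased differently from the paper's. The paper argues combinatorially. It conditions on the element that opens the $(k+1)$-th block, placed at position $j+r+1$. The $j+r$ earlier elements form a partition counted by ${j+r \brace k}_{r,d}$. Each of the $n-r-j$ later elements must join an existing block, with exactly $k+1-d$ admissible choices.

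You instead telescope \eqref{rec:3} in the upper index with the lower index frozen at $k+1$. Equivalently, you extract the coefficient of $x^{n+1}$ from $S_{k+1}(x)=\frac{x}{1-(k+1-d)x}S_k(x)$.

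The two proofs use the same decomposition. Your $s$-th term $(k-d+1)^{s}{n-s \brace k}_{r,d}$ counts exactly the partitions in which element $n+1-s$ opens the last block. That is the paper's pivot with $j=n-s-r$.

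What your route buys is that it needs nothing beyond \eqref{rec:3} and the vanishing of ${m \brace k+1}_{r,d}$ for $m\le k$. The paper asserts the count of exactly $k+1-d$ choices for each trailing element ``by definition''; that count relies on the $d$ immediate predecessors lying in $d$ distinct blocks. You inherit it from the proof of \eqref{rec:3} instead of re-arguing it. The paper's route, in turn, gives each summand a direct combinatorial meaning.

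One small precision. \eqref{rec:3} fails at $n=k=r$ under the convention ${r-1 \brace r-1}_{r,d}=0$, since it would give ${r \brace r}_{r,d}=0$. So the fact that protects your argument is that every application has upper index $m+1\ge k+1\ge r+1$, not merely that the lower indices are at least $r$. Likewise, your side remark that ${m \brace r}_{r,d}=(r-d)^{m-r}$ is consistent with \eqref{rec:3} holds only for $m>r$. Neither point affects the proof.
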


\begin{proof}
    We interpret ${n+1 \brace k+1}_{r,d}$ combinatorially as the number of ways to partition a set of $n+1$ elements into $k+1$ non-empty subsets. Consider the specific element that initiates the $(k+1)$-th subset, and let its position be $j+r+1$.

    The $j+r$ elements preceding it must form exactly $k$ subsets, which contributes ${j+r \brace k}_{r,d}$. The pivot element itself opens the new subset in $1$ way. The remaining $n - r - j$ elements are then distributed among the $k+1$ available subsets. By definition, inserting an element into $k+1$ subsets contributes  $(k+1) - d = k-d+1$, meaning these final elements contribute a combined factor of $(k-d+1)^{n-r-j}$.

    Summing over all possible positions for this pivot element  yields the identity.
\end{proof}
\begin{corollary}
For non-negative integers $n, k$ and $r > d$, we have
\[
\left\{ \begin{matrix} n+k \\ n \end{matrix} \right\}_{r,d} = \sum_{r-d \le i_1 \le \dots \le i_k \le n-d} i_1i_2\cdots i_k.
\]
\end{corollary}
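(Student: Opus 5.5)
The plan is to read the identity off the ordinary generating function established earlier. Fix $n$ with $n\ge r$ and use the theorem on the ordinary generating function with $k$ replaced by $n$:
\[
\sum_{m\ge n} {m \brace n}_{r,d} x^{m} = \frac{x^{n}}{\prod_{j=r}^{n}\bigl(1-(j-d)x\bigr)}.
\]
The left-hand side of the corollary is then the coefficient of $x^{n+k}$ in this series. Equivalently, it is the coefficient of $x^{k}$ in $\prod_{j=r}^{n}\bigl(1-(j-d)x\bigr)^{-1}$.

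Next I expand each factor as a geometric series, $\bigl(1-(j-d)x\bigr)^{-1}=\sum_{a_j\ge 0}(j-d)^{a_j}x^{a_j}$. This is legitimate as formal power series, since $j-d\ge r-d>0$. Multiplying these expansions, the coefficient of $x^{k}$ is
\[
\sum_{a_r+\cdots+a_n=k}\ \prod_{j=r}^{n}(j-d)^{a_j}.
\]
This is the complete homogeneous symmetric polynomial $h_k(r-d,\,r-d+1,\dots,n-d)$. Each composition $(a_r,\dots,a_n)$ of $k$ corresponds bijectively to a weakly increasing sequence $i_1\le\cdots\le i_k$ taking values in $\{r-d,\dots,n-d\}$: the value $j-d$ appears exactly $a_j$ times. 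Under this bijection the monomial $\prod_j (j-d)^{a_j}$ equals $i_1\cdots i_k$, which gives the stated sum.

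Finally I check the degenerate cases. For $k=0$ both sides equal $1$, since the right-hand side is the empty product. If $n<r$, the left-hand side vanishes because ${m\brace n}_{r,d}=0$ for $n<r$, and the right-hand side is an empty sum because $r-d>n-d$. So these cases agree, under the convention that the empty sum is $0$ and the empty product is $1$.

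There is no real obstacle here. The only care needed is the index bookkeeping in the bijection between compositions and weakly increasing sequences, together with the convention for $n<r$. As a cross-check, or as an alternative proof, one can induct on $k$ using recurrence~\eqref{rec:3} in the form
\[
{n+k \brace n}_{r,d}={n+k-1 \brace n-1}_{r,d}+(n-d){n+k-1 \brace n}_{r,d}.
\]
On the symmetric-function side this is the identity $h_k(y_r,\dots,y_n)=h_k(y_r,\dots,y_{n-1})+y_n\,h_{k-1}(y_r,\dots,y_n)$ with $y_j=j-d$. It is obtained by splitting the sum according to whether $i_k=n-d$.
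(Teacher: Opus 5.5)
Your proof is correct, but it takes a different route from the paper. The paper gives no separate argument for this corollary. It is one of the results obtained by feeding the reduction identity ${n\brace k}_{r,d}={n-d\brace k-d}_{r-d}$ into the known $r$-Stirling identity ${m+k\brace m}_{s}=\sum_{s\le i_1\le\cdots\le i_k\le m} i_1\cdots i_k$ from the cited literature (Broder), with $m=n-d$ and $s=r-d$. The shifted summation range $[r-d,\,n-d]$ is then just the parameter shift of the reduction identity.

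You work directly at the $(r,d)$ level instead. Extracting coefficients from $x^n/\prod_{j=r}^{n}(1-(j-d)x)$ gives $h_k(r-d,\dots,n-d)$, and the multiset bijection finishes the proof.

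Your route is self-contained within the paper. It needs only the recurrence and the base case ${m\brace r}_{r,d}=(r-d)^{m-r}$ that produce the generating function. It bypasses both the reduction identity, which the paper proves via the inclusion–exclusion explicit formula, and the external citation. It also shows transparently where the shift by $d$ comes from: the factors $1-(j-d)x$. The paper's route is a one-line transfer, but it rests on those two ingredients; note that Broder's identity is itself the $d=0$ case of your computation. Your alternative induction via the recurrence and $h_k(y_r,\dots,y_n)=h_k(y_r,\dots,y_{n-1})+y_n\,h_{k-1}(y_r,\dots,y_n)$ is also sound. Its base case $n=r$ works because ${r+k-1\brace r-1}_{r,d}=0$ and $h_k$ of no variables vanishes for $k\ge 1$.

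Two cosmetic points:
\begin{itemize}
\item The geometric-series expansion is valid in formal power series whatever the sign of $j-d$, so the positivity remark is unnecessary.
\item Your two degenerate cases overlap at $k=0$, $n<r$, where you implicitly assert both $1$ and $0$. The statement really presupposes $n\ge r$, and below that the value is purely a matter of convention.
\end{itemize}
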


\begin{corollary}
For integers $n \ge k \ge r > d$, we have
	\begin{align}
		{n \brace k}_{r,d} = \sum_{j=k-r}^{n-r} {n-r \brace j} \binom{j}{k-r}(r-d)_{j-k+r},
	\end{align}
\end{corollary}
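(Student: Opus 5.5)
The identity to prove is
\[
{n \brace k}_{r,d}=\sum_{j=k-r}^{n-r}{n-r\brace j}\binom{j}{k-r}(r-d)_{j-k+r}.
\]
The plan is to derive it from the expansion in Theorem~\ref{cor:1} by substituting the classical expansion of ordinary powers into falling factorials, and then comparing coefficients in the falling factorial basis. Put $N=n-r$ and $K=k-r$, so that the claim reads
\[
{N+r \brace K+r}_{r,d}=\sum_{j=K}^{N}{N\brace j}\binom{j}{K}(r-d)_{j-K}.
\]
Throughout, $x$ is treated as a polynomial variable.

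\textbf{Step 1.} Start from the classical identity $y^N=\sum_{j=0}^N {N\brace j}(y)_j$, which is \eqref{eq:rst2} with $r=0$. Set $y=x+(r-d)$, so that
\[
(x+r-d)^N=\sum_{j}{N\brace j}(x+r-d)_j .
\]

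\textbf{Step 2.} Expand $(x+r-d)_j$ by the Vandermonde identity for falling factorials,
\[
(a+b)_j=\sum_{K}\binom{j}{K}(a)_K(b)_{j-K},
\]
with $a=x$ and $b=r-d$. This holds as a polynomial identity in $a,b$, and can be read off from $\binom{a+b}{j}=\sum_K\binom{a}{K}\binom{b}{j-K}$ after multiplying by $j!$. Substituting and exchanging the order of summation gives
\[
(x+r-d)^N=\sum_{K=0}^{N}\Bigl(\sum_{j=K}^{N}{N\brace j}\binom{j}{K}(r-d)_{j-K}\Bigr)(x)_K .
\]

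\textbf{Step 3.} Theorem~\ref{cor:1} gives $(x+r-d)^N=\sum_K {N+r\brace K+r}_{r,d}(x)_K$. The falling factorials $(x)_K$ form a basis of the polynomial ring, so the coefficients of $(x)_K$ in the two expansions agree, and this is exactly the claimed identity.

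None of these steps is a real obstacle. The only point needing care is that the Vandermonde expansion is applied with $b=r-d$ a positive integer, so $(r-d)_{j-K}$ vanishes once $j-K>r-d$. This is consistent with the formula and causes no difficulty, since the identity holds as one between polynomials.

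As an alternative, one could instead use the reduction identity \eqref{eq:relation} together with the known $r$-Stirling identity ${N+s\brace K+s}_s=\sum_j{N\brace j}\binom{j}{K}(s)_{j-K}$ at $s=r-d$. That route, however, depends on citing the literature, so the self-contained derivation above is the one I would use.
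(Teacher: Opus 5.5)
Your proof is correct, but it takes a different route from the paper's. The paper gives no separate argument here. By its blanket remark, the corollary comes from applying the reduction identity ${n\brace k}_{r,d}={n-d\brace k-d}_{r-d}$ to the known $r$-Stirling identity ${n\brace k}_{s}=\sum_{j}{n-s\brace j}\binom{j}{k-s}(s)_{j-k+s}$ with $s=r-d$, since $n-d-s=n-r$ and $k-d-s=k-r$. That is exactly the alternative you mention at the end.

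Your main derivation instead treats the identity as a change of basis. You expand $(x+r-d)^{n-r}$ in the falling factorials $(x+r-d)_j$ via \eqref{eq:rst2} with $r=0$, re-expand each $(x+r-d)_j$ by Chu--Vandermonde, and match coefficients against Theorem~\ref{cor:1}. Each step checks out, including the exchange of summations and the use of $\{(x)_K\}$ as a basis.

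The paper's route is brief and uniform: one transfer principle covers a whole family of identities. Its cost is that it rests on the reduction identity, which is itself obtained from the inclusion--exclusion formula \eqref{eq:explicit}, and on an external citation.

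Your route depends only on Theorem~\ref{cor:1}, and hence only on the recurrence \eqref{rec:3}, so it is self-contained. At $d=0$ it also reproves the cited $r$-Stirling identity. The same argument generalizes: split $x+r-d=(x+s)+(r-s-d)$ and apply Theorem~\ref{cor:1} with $r-s$ in place of $r$. This gives the expansion of ${n\brace k}_{r,d}$ in terms of ${n-s\brace j+r-s}_{r-s,d}$ and $(s)_{j-k+r}$, valid whenever $r-s>d$.
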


\begin{corollary}
For integers $n \ge k \ge r > d$, we have
\begin{align}
		{n \brace k}_{r,d} = \frac{1}{(k-r)!} \sum_{i=0}^{r-d} \binom{r-d}{i} {n-r \brace k-d-i } (k-d-i)!.
\end{align}
\end{corollary}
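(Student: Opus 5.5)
The plan is to read both sides as counts of functions and avoid algebraic manipulation. Set $N=n-r$, and let $B$ be a set of $k-d$ labelled bins split as $B=A\cup C$. Here $A$ is a set of $r-d$ ``old'' bins and $C$ is a set of $k-r$ ``new'' bins.

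\textbf{Step 1: interpret the left-hand side.} By the explicit formula~\eqref{eq:explicit}, $(k-r)!\,{n\brace k}_{r,d}$ is the inclusion--exclusion count $\sum_{j}(-1)^j\binom{k-r}{j}(k-d-j)^{N}$. This is exactly the number of maps $f:\{1,\dots,N\}\to B$ whose image contains every bin of $C$. Indeed, $(k-d-j)^N$ counts the maps that avoid a fixed set of $j$ new bins, and the alternating sum sieves these out. So we only need to show that the right-hand side multiplied by $(k-r)!$ counts the same family of maps.

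\textbf{Step 2: classify by which old bins are hit.} Split the maps $f$ according to the set $S=f(\{1,\dots,N\})\cap A$, and write $|S|=s$ with $0\le s\le r-d$. For a fixed $S$, the map $f$ is a surjection of an $N$-set onto $C\cup S$, a set of size $k-r+s$. The number of such surjections is ${N\brace k-r+s}(k-r+s)!$. There are $\binom{r-d}{s}$ choices of $S$. Hence
\[
(k-r)!\,{n\brace k}_{r,d}=\sum_{s=0}^{r-d}\binom{r-d}{s}{n-r\brace k-r+s}(k-r+s)!.
\]

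\textbf{Step 3: reindex.} Put $i=r-d-s$. Then $\binom{r-d}{s}=\binom{r-d}{i}$ and $k-r+s=k-d-i$, which gives exactly the stated corollary after dividing by $(k-r)!$.

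\textbf{Possible obstacle.} The only delicate point is Step 1, namely justifying that the alternating sum in~\eqref{eq:explicit} counts maps onto $k-d$ labelled bins that are surjective onto the $k-r$ new ones. The cleanest way is to verify this directly by inclusion--exclusion on the events ``new bin $c$ is empty'', rather than relying on the informal wording of the earlier proof.

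If one prefers a purely algebraic route, use $m^{N}=\sum_{t}{N\brace t}(m)_t$ with $m=k-d-j$. Then swap the sums and evaluate $\sum_j(-1)^j\binom{k-r}{j}(k-d-j)_t$ as a $(k-r)$-th finite difference of a falling factorial, which produces $\frac{t!}{(t-k+r)!}\binom{r-d}{t-k+r}$. Setting $t=k-d-i$ gives the same result; this evaluation would be the main computation in that route.
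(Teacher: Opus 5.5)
Your main argument (Steps 1--3) is correct, and it takes a different route from the paper. The paper gives no separate proof of this corollary. It is one of the results obtained by feeding the reduction identity ${n\brace k}_{r,d}={n-d\brace k-d}_{r-d}$ of Theorem~\ref{th:relation} into the known $r$-Stirling identity ${n\brace k}_{r}=\frac{1}{(k-r)!}\sum_{i=0}^{r}\binom{r}{i}{n-r\brace k-i}(k-i)!$ from the cited literature, with $(n,k,r)\mapsto(n-d,k-d,r-d)$.

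You instead start from the explicit formula~\eqref{eq:explicit}. You read $(k-r)!\,{n\brace k}_{r,d}$ as the number of maps from an $(n-r)$-set into $k-d$ labelled bins that cover the $k-r$ new bins, and classify these maps by the set of old bins they use. In effect you prove the cited $r$-Stirling identity from scratch, with $r-d$ playing the role of $r$. The paper's route is a one-liner but outsources the content to an external reference. Yours is self-contained and makes the meaning of the sum transparent. Both rest on the same foundation, since Theorem~\ref{th:relation} is itself derived from~\eqref{eq:explicit}. You are right to justify Step 1 by ordinary inclusion--exclusion on maps rather than by the informal bin description in the paper's proof of~\eqref{eq:explicit}. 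You only need that formula as a numerical identity, and it is indeed correct.

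There is one slip in your optional algebraic route. The finite difference evaluates to
\[
\sum_{j}(-1)^j\binom{k-r}{j}(k-d-j)_t=(t)_{k-r}\,(r-d)_{t-k+r}=t!\binom{r-d}{t-k+r},
\]
not $\frac{t!}{(t-k+r)!}\binom{r-d}{t-k+r}$. You can check this at $k=r$, where the sum is just $(r-d)_t$. With your expression, the substitution $t=k-d-i$ would leave a stray factor $1/(r-d-i)!$. With the corrected value it yields the corollary. This does not affect your main combinatorial proof.
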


\begin{proposition}\label{pro:25}
For integers $n \ge k \ge r > d$ and $r \ge s \ge 0$, we have
	\begin{align}
		{n \brace k}_{r,d} = \sum_{j=k-r}^{n-r} \binom{n-r}{j} {j+r-s \brace k-s}_{r-s,d}\, s^{n-r-j}.
	\end{align}
\end{proposition}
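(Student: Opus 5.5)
The plan is to derive the identity directly from the explicit formula~\eqref{eq:explicit}, using nothing more than the binomial theorem. Write $m=n-r$ and $\ell=k-r$. By~\eqref{eq:explicit},
\[
{n \brace k}_{r,d}=\frac{1}{\ell!}\sum_{i=0}^{\ell}(-1)^i\binom{\ell}{i}(k-d-i)^{m}.
\]
The key observation is that the parameters $(n,k,r)$ and $(j+r-s,\,k-s,\,r-s)$ have the same difference $k-r=(k-s)-(r-s)=\ell$. So the summand on the right-hand side of Proposition~\ref{pro:25}, again via~\eqref{eq:explicit}, is
\[
{j+r-s \brace k-s}_{r-s,d}=\frac{1}{\ell!}\sum_{i=0}^{\ell}(-1)^i\binom{\ell}{i}(k-s-d-i)^{j}.
\]

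First I split each base as $k-d-i=(k-s-d-i)+s$ and expand by the binomial theorem:
\[
(k-d-i)^{m}=\sum_{j=0}^{m}\binom{m}{j}(k-s-d-i)^{j}s^{m-j}.
\]
Substituting this into the explicit formula and interchanging the two finite sums over $i$ and $j$, the inner sum over $i$ (with the factor $1/\ell!$) is exactly the displayed expression for ${j+r-s \brace k-s}_{r-s,d}$. This gives
\[
{n \brace k}_{r,d}=\sum_{j=0}^{m}\binom{m}{j}{j+r-s \brace k-s}_{r-s,d}\,s^{m-j}.
\]
Finally I restrict the range: for $j<k-r$ we have $j+r-s<k-s$, so the corresponding Stirling factor vanishes and the sum may start at $j=k-r$. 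This is the stated identity. As a cross-check, the same computation can be phrased through the exponential generating function~\eqref{gen:exp}, where it amounts to $e^{(r-d)x}=e^{(r-s-d)x}e^{sx}$ followed by a binomial convolution.

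The main obstacle is the range of validity. Formula~\eqref{eq:explicit} and~\eqref{gen:exp} were established under the hypothesis that the lower index exceeds $d$, which here means $r-s>d$. When $r-s\le d$, the numbers ${\cdot \brace \cdot}_{r-s,d}$ are governed instead by~\eqref{wei2008}. For that case I would argue combinatorially, which works uniformly in $s$:
\begin{itemize}
\item among the $n-r$ elements $r+1,\dots,n$, choose the $n-r-j$ of them that join the blocks of the elements $r-s+1,\dots,r$;
\item the remaining elements, together with $1,\dots,r$, form a configuration counted by the smaller number, with the $s$ distinguished blocks accounted for by the factor $s^{n-r-j}$.
\end{itemize}
The delicate point is checking that the distance restriction $|i-j|>d$ is correctly accounted for in this bijection; this is where the real care is needed. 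If that check fails, I would state the proposition under the additional hypothesis $r-s>d$, where the algebraic proof above is complete.
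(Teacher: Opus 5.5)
Your algebraic argument is correct whenever $r-s>d$, and it is essentially the paper's proof. The paper writes $(r-d)^i=((r-s-d)+s)^i$ inside Theorem~\ref{thm:thmrp} and recognizes the inner sum by that same theorem. You write $(k-d-i)^{n-r}=((k-s-d-i)+s)^{n-r}$ inside \eqref{eq:explicit}, which needs only one binomial expansion and no reindexing. One small repair: for $j<k-r$ the terms you drop are values of the alternating sum, not yet Stirling numbers, because \eqref{eq:explicit} was only established for $n\ge k$. They still vanish, since $\sum_{i=0}^{\ell}(-1)^i\binom{\ell}{i}P(i)=0$ for every polynomial $P$ of degree less than $\ell$.

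The genuine gap is the case $r-s\le d$, and your combinatorial bullet cannot close it, because for $r-s<d$ the identity is false. Take $n=5$, $k=3$, $r=2$, $d=1$, $s=2$, which meets every stated hypothesis. The left side is ${5\brace 3}_{2,1}=7$, while the right side is
\[
\sum_{j=1}^{3}\binom{3}{j}{j\brace 1}_{0,1}2^{3-j}=\binom{3}{1}\cdot 1\cdot 2^{2}=12,
\]
since ${2\brace 1}_{0,1}={3\brace 1}_{0,1}=0$ (consecutive integers may not share a block). The algebra only balances if ${j\brace 1}_{0,1}$ is replaced by its formal value $0^j-(-1)^j$ from \eqref{eq:explicit}, i.e.\ $1,-1,1$, which gives $12-6+1=7$. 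Those formal values are not the combinatorial numbers once $r-s$ drops below $d$.

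The paper's proof carries the same unstated restriction. It applies Theorem~\ref{thm:thmrp} to ${j+r-s\brace k-s}_{r-s,d}$, and that theorem rests on Theorem~\ref{thm:maintheorem1} with $p=r>d$. So your fallback is the right call, and you can sharpen it. When $r-s=d$, the explicit formula is still valid for ${n'\brace k'}_{d,d}$: it reads $\frac{1}{(k'-d)!}\sum_{i}(-1)^i\binom{k'-d}{i}(k'-d-i)^{n'-d}={n'-d\brace k'-d}$, which agrees with \eqref{wei2008}. Your argument therefore proves the proposition under the corrected hypothesis $r-s\ge d$.
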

\begin{proof}
By Theorem~\ref{thm:thmrp}, we have
\[
{n \brace k}_{r,d} = \sum_{i=0}^{n-r} \binom{n-r}{i} {n-r-i \brace k-r}(r-d)^i.
\]
Writing $(r-d)^i = ((r-s-d) + s)^i$ and expanding via the binomial theorem gives
\[
(r-d)^i = \sum_{\ell=0}^{i} \binom{i}{\ell} (r-s-d)^{i-\ell}\, s^{\ell}.
\]
Substituting and exchanging the order of summation (setting $j = i - \ell$):
\begin{align*}
{n \brace k}_{r,d} &= \sum_{i=0}^{n-r} \binom{n-r}{i} {n-r-i \brace k-r} \sum_{\ell=0}^{i} \binom{i}{\ell} (r-s-d)^{i-\ell}\, s^{\ell} \\
&= \sum_{j=0}^{n-r} \sum_{\ell=0}^{n-r-j} \binom{n-r}{j+\ell} \binom{j+\ell}{\ell} {n-r-j-\ell \brace k-r}(r-s-d)^{j}\, s^{\ell}\\
&= \sum_{j=0}^{n-r} \binom{n-r}{j} s^{n-r-j} \underbrace{\sum_{m=0}^{j} \binom{j}{m} {j-m \brace k-r}(r-s-d)^{m}}_{= {j+r-s \brace k-s}_{r-s,d} \text{ by Theorem~\ref{thm:thmrp}}},
\end{align*}
where the last step uses $\binom{n-r}{j+\ell}\binom{j+\ell}{\ell} = \binom{n-r}{j}\binom{n-r-j}{\ell}$ and the inner sum over $\ell$ collapses via the binomial theorem, while the sum over $j$ is recognized as Theorem~\ref{thm:thmrp} applied to ${j+r-s \brace k-s}_{r-s,d}$.
\end{proof}

\begin{corollary}
For integers $n \ge k \ge r > d$, we have
	\begin{align}
		{n \brace k}_{r,d} = \sum_{j=k-r}^{n-r} \binom{n-r}{j} {j+r-1 \brace k-1}_{r-1,d}.
	\end{align}
\end{corollary}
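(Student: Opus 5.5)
This is the case $s=1$ of Proposition~\ref{pro:25}, so I will specialize that proposition rather than argue from scratch. The hypotheses match: we have $n\ge k\ge r>d$, and $s=1$ satisfies $r\ge s\ge 0$ because $r>d\ge 0$ forces $r\ge 1$. The factor $s^{n-r-j}$ becomes $1^{n-r-j}=1$. The inner numbers become ${j+r-1 \brace k-1}_{r-1,d}$. This gives
\[
{n \brace k}_{r,d} = \sum_{j=k-r}^{n-r} \binom{n-r}{j} {j+r-1 \brace k-1}_{r-1,d},
\]
which is exactly the claimed identity.

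The one point needing care is that the proof of Proposition~\ref{pro:25} invokes Theorem~\ref{thm:thmrp} with the parameter $r-s=r-1$. When $r-1\le d$, that is $r=d+1$, the paper's standing convention $r>d$ fails for the inner number. In this case I would verify the inner identity directly.

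Theorem~\ref{thm:thmrp} as stated only requires $n\ge k\ge r$, with no condition relating $r$ to $d$. Its right-hand side with parameter $r-1=d$ is
\[
\sum_{m}\binom{j}{m}{j-m \brace k-r}\,0^m={j \brace k-r},
\]
since only the term $m=0$ survives. On the other hand, formula~\eqref{wei2008} gives
\[
{j+d \brace k-1}_{d,d}={j \brace k-1-d}={j\brace k-r},
\]
so both sides agree. Hence the inner sum still collapses correctly, and the specialization is valid for all $r>d$.

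For robustness I would also give a direct check, which I do not expect to be a real obstacle. Start from Theorem~\ref{thm:thmrp}:
\[
{n \brace k}_{r,d}=\sum_i\binom{n-r}{i}{n-r-i\brace k-r}(r-d)^i.
\]
Write $(r-d)^i=\sum_\ell\binom{i}{\ell}(r-1-d)^{\ell}$. Then reorganize using $\binom{n-r}{i}\binom{i}{\ell}=\binom{n-r}{j}\binom{j}{i-\ell}$ with $j=n-r-(i-\ell)$. The inner sum then becomes the Theorem~\ref{thm:thmrp} expansion of ${j+r-1\brace k-1}_{r-1,d}$.

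As a sanity check against Table~\ref{table:1}, take $r=2$, $d=1$, $n=4$, $k=3$. The right-hand side is
\[
\binom{2}{1}{2\brace 2}_{1,1}+\binom22{3\brace 2}_{1,1}=2\cdot1+1\cdot{2\brace1}=2+1=3,
\]
which matches ${4\brace 3}_{2,1}=3$.
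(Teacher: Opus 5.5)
Your proposal is correct and follows the paper's proof exactly: the corollary is the case $s=1$ of Proposition~\ref{pro:25}. Your extra check that Theorem~\ref{thm:thmrp} still holds at parameter $r-1=d$ (both sides equal ${j \brace k-r}$) is a useful addition, because it covers the boundary case $r=d+1$, which the paper does not address explicitly.

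One small slip is in your optional direct check. With $j=n-r-(i-\ell)$, the correct reindexing is $\binom{n-r}{i}\binom{i}{\ell}=\binom{n-r}{j}\binom{j}{\ell}$, not $\binom{n-r}{j}\binom{j}{i-\ell}$. For example, $n-r=3$ and $i=\ell=2$ give $3$ on the left but $1$ with your version.
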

\begin{proof}
This is the special case $s = 1$ of Proposition~\ref{pro:25}.
\end{proof}

\begin{corollary}
For integers $n \ge k \ge r > d$ and $r > s \ge 0$, we have
	\begin{align}
		{n \brace k}_{r,d} = \sum_{j=k-r}^{n-r} {n-s \brace j+r-s}_{r-s,d} \binom{j}{k-r} (s)_{j-k+r}.
	\end{align}
\end{corollary}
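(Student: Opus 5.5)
The plan is to prove the identity by comparing coefficients in the falling-factorial basis, using Theorem~\ref{cor:1} twice: once for the pair $(r,d)$ and once for the pair $(r-s,d)$, joined by a Vandermonde-type expansion of $(x+s)_j$. Set $N=n-r$. Theorem~\ref{cor:1}, after the shift $k\mapsto k-r$, reads
\[
(x+r-d)^{N}=\sum_{k=r}^{n}{n \brace k}_{r,d}\,(x)_{k-r}.
\]
So ${n\brace k}_{r,d}$ is the coefficient of $(x)_{k-r}$ in $(x+r-d)^N$. Since $\{(x)_m\}_{m\ge 0}$ is a basis of $\mathbb{Q}[x]$, it suffices to expand $(x+r-d)^N$ a second way and read off the same coefficient.

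\textbf{Step 1 (second expansion).} Write $x+r-d=y+(r-s-d)$ with $y=x+s$. When $r-s>d$, apply Theorem~\ref{cor:1} with $r$ replaced by $r-s$:
\[
(y+r-s-d)^{N}=\sum_{j=0}^{N}{N+r-s \brace j+r-s}_{r-s,d}(y)_j .
\]
Since $N+r-s=n-s$, these coefficients are exactly the numbers ${n-s\brace j+r-s}_{r-s,d}$ occurring in the statement.

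\textbf{Step 2 (Vandermonde).} Expand the shifted falling factorial:
\[
(x+s)_j=\sum_{m=0}^{j}\binom{j}{m}(s)_{j-m}(x)_m .
\]
This holds as a polynomial identity, and I would verify it by checking it at nonnegative integers $x$ by a subset-counting argument. Substituting it into Step 1 and collecting the coefficient of $(x)_{k-r}$ (so $m=k-r$ and $j\ge k-r$) gives
\[
\sum_{j=k-r}^{n-r}{n-s \brace j+r-s}_{r-s,d}\binom{j}{k-r}(s)_{j-k+r}.
\]
Equating this with the coefficient from the first expansion proves the claim. The terms with $j-k+r>s$ vanish automatically because $(s)_{j-k+r}=0$ there, so the summation range needs no separate treatment.

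\textbf{Step 3 (the range $r-s\le d$: main obstacle).} Theorem~\ref{cor:1} requires $r-s>d$, so Step 1 must be justified separately when $r-s\le d$.
\begin{itemize}[leftmargin=2em]
\item \emph{Case $r-s=d$.} Formula~\eqref{wei2008} gives ${n-s\brace j+r-s}_{r-s,d}={n-s-d\brace j+r-s-d}$, which is the $0$-Stirling number. Step 1 then becomes $y^N=\sum_j{N\brace j}(y)_j$, the classical expansion~\eqref{eq:rst2} with $r=0$, so the argument above goes through unchanged. A spot check with $(r,d,s)=(2,1,1)$ gives ${n-2\brace k-2}+(k-1){n-2\brace k-1}={n-1\brace k-1}$, which equals ${n\brace k}_{2,1}$ by Theorem~\ref{th:relation}.
\item \emph{Case $r-s<d$.} Here I would first try to establish, from \eqref{wei2008} and the convention ${a\brace b}=0$ for out-of-range indices, an analogue of Step 1 valid in that range.
\end{itemize}
My worry is that with $t=d-(r-s)\ge1$ the natural analogue yields $(x+r-d+t)_t\,(x+r-d)^{N-t}$ rather than $(x+r-d)^N$. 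Also, a small case such as $n=k=r=3$, $d=2$, $s=2$ involves the degenerate symbol ${1\brace 1}_{1,2}={-1\brace -1}$. So this step is where I expect the proof to hinge on how the paper's boundary conventions are read. I would settle it by checking those small cases directly against Table~\ref{table:2} before committing to a convention.
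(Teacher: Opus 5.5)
Your Steps 1--2, together with the case $r-s=d$, are correct and give a complete, self-contained proof for $0\le s\le r-d$. The ingredients are Theorem~\ref{cor:1} for the pair $(r-s,d)$ (or the classical expansion $y^N=\sum_j{N\brace j}(y)_j$ when $r-s=d$), the Vandermonde expansion of $(x+s)_j$, and uniqueness of coordinates in the basis $\{(x)_m\}$. The paper gives no explicit proof of this corollary. By its blanket remark, the corollary is obtained by pushing the corresponding identity for ordinary $r$-Stirling numbers through the reduction identity ${n\brace k}_{r,d}={n-d\brace k-d}_{r-d}$. That route needs the same restriction. The ordinary identity involves ${n-d-s\brace j+r-d-s}_{r-d-s}$, so it only makes sense for $s\le r-d$, and rewriting that symbol as ${n-s\brace j+r-s}_{r-s,d}$ requires $r-s\ge d$. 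Your argument avoids the external citation and shows exactly where the restriction enters.

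The case $r-s<d$, however, is not a gap you can close: the statement is false there, and your own computation explains why. With $t=d-(r-s)\ge 1$ and $N\ge t$, formula \eqref{wei2008} gives
\[
\sum_{j}{n-s\brace j+r-s}_{r-s,d}\,(x+s)_j=(x+r-d+t)_t\,(x+r-d)^{N-t}\neq (x+r-d)^N .
\]
The simplest failure is at $n=r+1$, $k=r$. The left side is ${r+1\brace r}_{r,d}=r-d$. On the right, the $j=0$ term ${r-s+1\brace r-s}_{r-s,d}$ vanishes, because every pair in $\{1,\dots,r-s+1\}$ is at distance at most $r-s\le d$. The $j=1$ term is $(s)_1=s$. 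So equality would force $s=r-d$, i.e.\ $r-s=d$.

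Two concrete instances:
\begin{itemize}
\item ${4\brace 3}_{3,2}=1$, but with $s=2$ the right side is ${2\brace 1}_{1,2}+2{2\brace 2}_{1,2}=0+2=2$.
\item Table~\ref{table:2} gives ${6\brace 5}_{5,3}=2$, whereas $s=3$ yields ${3\brace 2}_{2,3}+3{3\brace 3}_{2,3}=3$.
\end{itemize}
The case $n=k=r$ you planned to test cannot detect this, since both sides equal $1$ there.

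So instead of searching for a convention, you should conclude that the corollary requires $0\le s\le r-d$, and in that range your proof is complete. The full range $0\le s<r$ can be rescued only by abandoning the combinatorial meaning. If ${n\brace k}_{r',d}$ with $r'<d$ is defined through the explicit formula \eqref{eq:explicit} (which gives, e.g., ${2\brace 1}_{1,2}=-1$), then Theorem~\ref{cor:1}, and hence your argument, holds for every $s$. But those values contradict both the definition and \eqref{wei2008}.
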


\end{document}